\numberwithin{equation}{section}
\theoremstyle{plain}
\newtheorem{thm}{Theorem}
\newtheorem{pro}[thm]{Proposition}        
\newtheorem{cor}[thm]{Corollary}
\newtheorem{defi}[thm]{Definition}
\newtheorem{rem}[thm]{Remark}
\newtheorem{lem}[thm]{Lemma}
\newcommand{\g}{\mathfrak{g}}
\newcommand{\R}{\mathbb R}
\newcommand{\C}{\mathbb C}
\newcommand{\Z}{\mathbb Z}
\begin{document}
\title[tt*-Toda equations and Quantum cohomology]{Solutions of the tt*-Toda Equations and Quantum Cohomology of Flag Manifolds}
\author{Yoshiki Kaneko}
\date{}
\maketitle
\begin{abstract}
We relate the quantum cohomology of minuscule flag manifolds to the tt*-Toda equations, a special case of the topological-antitopological fusion equations which were introduced by Cecotti and Vafa in their study of supersymmetric quantum field theories.  To do this, we combine the Lie-theoretic treatment of the tt*-Toda equations of Guest-Ho with the  Lie-theoretic description of the quantum cohomology of minuscule flag manifolds from Chaput-Manivel-Perrin and Golyshev-Manivel.
\end{abstract}
\vspace{10pt}
\section{Introduction}
\hspace{1pt}\\
\indent It is well known that solutions of the 2-dimensional Toda equations correspond to primitive harmonic maps into flag manifolds. The tt*-Toda equations provide a special case of the Toda equations; here the harmonic maps can be regarded as generalizations of variations of Hodge structure, or VHS. Certain special solutions illustrate the mirror symmetry phenomenon: For example, according to Cecotti and Vafa \cite{CV}, the generalized VHS for a solution may correspond to the quantum (orbifold) cohomology of a certain K\"{a}hler manifold.\\
\indent To be more precise, there are three aspects of this result. First it is necessary to establish a bijective correspondence between global solutions on \(\C^{*}=\C-\{0\}\) and their ``holomorphic data". Second, this holomorphic data has to be identified with a flat connection of the type used by Dubrovin in the theory of Frobenius manifolds - we call it the Dubrovin connection. Finally, for certain specific solutions, this has to be identified with the Dubrovin connection associated to the (small) quantum cohomology of a specific K\"{a}hler manifold. Guest, Its and Lin have investigated all three aspects in the case of the Lie group type \(A_{n}\) \cite{GIL}.\\
\indent In \cite{GH} the tt*-Toda equations are described for general complex simple Lie algebras. Guest and Ho obtained a correspondence between solutions and the fundamental Weyl alcove. It is expected (but not yet proved beyond the \(A_{n}\) case) that this gives a bijective correspondence between global solutions and points of (a subset of) the fundamental Weyl alcove.\\
\indent This paper is a contribution to the second and third aspects of the generalization of \cite{GIL} to the case of general complex simple Lie algebras. That is, we shall establish a correspondence between the holomorphic data of certain specific solutions of the tt*-Toda equations and the Dubrovin connections of minuscule flag manifolds, based on the Lie-theoretic approach of \cite{GH}. Minuscule flag manifolds are the projectivized weight orbits of minuscule weights (see \cite{CMP}).\\
\indent The quantum cohomology of flag manifolds has been the subject of many articles, especially from the point of view of quantum Schubert calculus.  For Lie-theoretic treatments we mention in particular \cite{FW}. 
The minuscule case has been studied in detail in \cite{CMP}.\\
\indent Golyshev and Manivel \cite{GM} described the quantum cohomology of minuscule flag manifolds in the context of the Satake isomorphism. For geometers, the most familiar example of this is the relation between the cohomology of the Grassmannian and the  exterior powers of the cohomology of projective space. A quantum version of this was established in \cite{GM}.  It depends on a description of the quantum cohomology of a minuscule flag manifold \(G/P_{\lambda_{i}}\) in terms of a family of  Lie algebra elements denoted by \(\sum_{j=1}^{l}e_{-\alpha_{j}}+qe_{\psi}\) of the Lie algebra \(\g\) (see section 2). Namely, quantum multiplication by the generator of the second cohomology of \(G/P_{\lambda_{i}}\) coincides with the action of \(\sum_{j=1}^{l}e_{-\alpha_{j}}+qe_{\psi}\) under the representation whose highest weight is \(\lambda_{i}\).\\
\indent Our main observation is that this element arises from a certain solution of the tt*-Toda equations. In the theory of \cite{GH}, this solution corresponds to the origin of the fundamental Weyl alcove. The Dubrovin connection is then \(d + (1/\lambda)(\sum_{j=1}^{l}e_{-\alpha_{j}}+qe_{\psi})dq/q\).\\
\indent As this solution depends only on \(G\), i.e. it is independent of the choice of minuscule representation of \(G\), we obtain a relation between the quantum cohomology rings of all minuscule flag manifold \(G/P_{\lambda_{i}}\) (for fixed \(G\)).  For Lie groups of type \(A_n\), \(D_n\), \(E_6\), there are several minuscule weights; thus in these cases the same solution of the tt*-Toda equations corresponds to the quantum cohomology of several minuscule flag manifolds. In particular, this means that the tt*-Toda equation gives an explanation for the quantum Satake isomorphism of \cite{GM}.\\
\indent In addition to these tt* aspects, we shall give more concrete statements and more details of the quantum cohomology results, based on the existing literature. We shall show directly how the above statement concerning the action of \(\sum_{j=1}^{l}e_{-\alpha_{j}}+qe_{\psi}\) follows from the  quantum Chevalley formula. Unlike the original proof in \cite{GM}, a case by case argument is not needed for this.

\indent The following are the contents of this paper. First we  review some aspects of the tt*-Toda equations, quantum cohomology and representation theory. In Section 2.1, we prepare  notation and we recall the tt*-Toda equations for general complex simple Lie groups. Then we describe the relationship between a solution and an element of the fundamental Weyl alcove. After that we give the definition of the Dubrovin connection in Section 2.2. In Section 2.3, we review the relations between representations, homogeneous spaces, and cohomology, in particular in the minuscule case. In Section 2.4, we make some observations on minuscule weight orbits. In section 2.5, we state the main theorem of this paper, which gives an explicit relation between the quantum cohomology of a minuscule flag manifold \(G/P_{\lambda_{i}}\) and a particular solution of the tt*-Toda equations for \(G\). We give part of the proof there, and make some comments on the quantum Satake isomorphism. The proof is completed in section 3.\\
\\
\textbf{\it Acknowledgement}: The author would like to thank Prof. Martin Guest for his considerable support. The author would also like to thank Prof. Takeshi Ikeda and Prof. Takashi Otofuji for their useful discussions and comments. The author would also like to thank the members of geometry group at Waseda for their helpful comments and discussions.
\\
\section{Preliminaries}
\hspace{1pt}\\
\indent First of all, we prepare some aspects of the tt*-Toda equations. Then we review some representation theory. We discuss  minuscule weights and irreducible representations. From the Bruhat decomposition, we can obtain a cell decomposition of the projectivized maximal weight orbit, its cohomology and its quantum cohomology \cite{FW}.\\


\subsection{The tt*-Toda Equations}
\hspace{1pt}\\
\indent We explain some theory of the tt*-Toda equations. It is possible to obtain  local solutions through the DPW construction, and a relationship between the space of local solutions and the fundamental Weyl alcove. For more details we refer to the article by Guest and Ho [GH].\\
\indent Let \(G\) be a complex simple simply-connected Lie group of rank \(l\) and \(\g\) be its Lie algebra. We take a Cartan subalgebra \(\mathfrak{h}\) and let \(\g=\mathfrak{h}\oplus\bigoplus_{\alpha\in\triangle}\g_{\alpha}\) be the root decomposition where \(\triangle\) is the set of roots. We choose positive roots \(\triangle^{+}\) and we obtain simple roots \(\Pi=\{\alpha_{1},\cdots,\alpha_{l}\}\). Let \((\;,\;)\) be any positive scalar multiple of the Killing form. This Killing form induces an inner product on \(\mathfrak{h}^{*}\). We also denote this inner product on \(\mathfrak{h}^{*}\) by the same notation \((\;,\;)\). We denote the coroot \(\frac{2\alpha}{(\alpha,\alpha)}\) of \(\alpha\) by \(\alpha^{\vee}\). We define an ordering of the roots by \(\alpha<\beta\) if \(\beta-\alpha\) is positive. \\
\indent We define \(H_{\alpha}\) by \((H_{\alpha},h)=\alpha^{\vee}(h)\) for all \(h\) in \(\mathfrak{h}\). Then we obtain a basis \(H_{\alpha_{1}},\cdots,H_{\alpha_{l}}\) of \(\mathfrak{h}\). We may choose basis vectors \(e_{\alpha}\in{\g_{\alpha}}\) such that  \((e_{\alpha},e_{-\alpha})=\frac{2}{(\alpha,\alpha)}\) for all \(\alpha\in{\triangle}\). Then we have 
\[[e_{\alpha},e_{\beta}]=
\begin{cases}
	0 & \text{if}\;\alpha+\beta\notin{\triangle}\\
	H_{\alpha} & \text{if}\;\alpha+\beta=0 \\
	N_{\alpha+\beta}e_{\alpha+\beta} & \text{if}\;\alpha+\beta\in{\triangle}-\{0\}
\end{cases}
\]
where \(N_{\alpha+\beta}\) is a nonzero complex number. We define \(\epsilon_{i}\) as the basis of \(\mathfrak{h}\) which is dual to \(\alpha_{i}\), that is \(\alpha_{i}(\epsilon_{j})=\delta_{i,j}\). We denote  the highest root by \(\psi:=\sum_{j=1}^{l}q_{j}\alpha_{j}\) and the Coxeter number by \(s:=1+\sum_{j=1}^{l}q_{j}\).\\
\indent Fix \(d_{0},\cdots,d_{l}\in{\C^{\times}}\). Let \(w\) be a function \(w:U\subset\C\rightarrow\mathfrak{h}\) where \(U\) is an open subset. Then the Toda equations are
\[2w_{t\overline{t}}=-\sum_{j=1}^{l}d_{j}e^{-2\alpha_{j}(w)}H_{\alpha_{j}}-d_{0}e^{2\psi(w)}H_{-\psi}.
\]
\indent If we consider the connection form \(\alpha\) 
\[\alpha=(w_{t}+\frac{1}{\lambda}\tilde{E}_{-})dt+(-w_{\bar{t}}+\lambda\tilde{E}_{+})d\bar{t}=:\alpha'dt+\alpha''d\bar{t}
\]
where \(\tilde{E}_{\pm}=Ad(e^{w})(\sum_{j=1}^{l}c_{j}^{\pm}e_{\pm\alpha_{j}}+c_{0}^{\pm}e_{\mp\psi})\) for \(c_{i}^{\pm}\in{\C^{\times}}\), then the curvature \(d\alpha+\alpha\wedge\alpha\) is zero if and only if the Toda equations hold.\\
\indent Given a real form of \(\g\), the corresponding real form of the Toda equations is defined by imposing two reality conditions: \(\alpha_{j}(w)\in{\R}\) for all \(i\), and \(\alpha'(t,\bar{t},\lambda)\mapsto\alpha''(t,\bar{t},1/\bar{\lambda})\) under the conjugation with respect to the real form.\\
\indent We add further conditions motivated by the tt* equations. Following Kostant \cite{Ko}, we introduce \(h_{0}=\sum_{j=1}^{l}\epsilon_{j}=\sum_{j=1}^{l}r_{j}H_{\alpha_{j}}\), \(e_{0}=\sum_{j=1}^{l}a_{j}e_{\alpha_{j}}\) and \(f_{0}=\sum_{j=1}^{l}(r_{j}/a_{j})e_{-\alpha_{j}}\) where \(r_{j}\in{\R^{\times}}\) and \(a_{j}\in{\C^{\times}}\). Since these generators satisfy the conditions \([h_{0},e_{0}]=e_{0}\), \([h_{0},f_{0}]=-f_{0}\) and \([e_{0},f_{0}]=h_{0}\), this subalgebra is isomorphic to \(\mathfrak{sl}_{2}\C\). We can decompose \(\g\) according to the adjoint action by this subalgebra, and then we obtain highest weight vectors \(u_{j}\) of irreducible subrepresentations \(V_{j}\) of \(\g=\bigoplus_{j}V_{j}\).\\
\indent We use the standard compact real form \(\rho\) which satisfies 
\[\rho(e_{\alpha})=-e_{-\alpha},\;\;\rho(H_{\alpha})=-H_{\alpha},
\]
for all \(\alpha\in\triangle\). By Hitchin [Hit], we have a \(\C\)-linear involution \(\sigma:\g\rightarrow \g\) defined by
\[
\sigma(u_{j})=-u_{j}, \;\;\sigma(f_{0})=-f_{0}\;\;(0\leq j\leq l).
\]
Using \(\rho\) and \(\sigma\), we define 
\[\chi:=\sigma\rho.
\]
Then it can be shown that \(\sigma\rho=\rho\sigma\) (\cite{Hit}) and that this \(\chi\) defines a split real form \(\g_{\R}\).
\begin{defi}(The tt*-Toda equations)
	The tt*-Toda equations are the Toda equations for \(w:\C^{\times}\rightarrow\mathfrak{h}\) together with\\
	(R) the above reality conditions (with respect to \(\chi\))\\
	(F) \(\sigma(w)=w\) (Frobenius condition) and\\
	(S) \(w=w(|t|)\) (similarity condition)\\
From (R) it follows that \(w\) takes values in \(\mathfrak{h}_{\sharp}=\bigoplus_{j=1}^{l}\R H_{\alpha_{j}}\).	
\end{defi}
\begin{rem}
	It is known that \(\sigma\) is the identity on \(\mathfrak{h}\) unless \(\g\) is of type \(A_{n}\), \(D_{2n+1}\) or \(E_{6}\). Thus the Frobenius condition on \(w\) is nontrivial only for these three types.
\end{rem}
\indent By the well known DPW construction (see \cite{GIL},\cite{GH}), it is possible to construct a local solution \(w\) near \(t=0\) from the connection form 
\[
\omega=\frac{1}{\lambda}\left(\sum_{j=1}^{l}z^{k_{j}}e_{-\alpha_{j}}+z^{k_{0}}e_{\psi}\right)dz
\]
(i.e. from any \(k_{0},\cdots,k_{l}\geq -1\)). Here \(z\) is a complex variable related to \(t\) by 
\[t=sz^{\frac{1}{s}}.
\] 
This solution satisfies 
\[w(|t|)\sim -m\text{log}|t|
\]
as \(t\rightarrow 0\), where \(m\in{\mathfrak{h}_{\sharp}}\) is defined by 
\[\alpha_{j}(m)=s(k_{j}+1)-1,\; 1\leq j\leq l.
\]
In fact, the converse is true.
\begin{pro}\label{pro4}{\cite{GH}}
	Let \(m\in{\mathfrak{h}_{\sharp}}\). There exists a local solution near zero of the tt*-Toda equations such that \(w(|t|)\sim -m\log{|t|}\) as \(t\rightarrow 0\) if and only if \(\alpha_{j}(m)\geq -1\) for \(j=0,\cdots,l\).
\end{pro}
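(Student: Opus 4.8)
The plan is to treat the two implications separately, using the DPW construction and the asymptotic formula recalled just above as a black box for the ``if'' direction, so that the substantive work lies in the ``only if'' direction, which I would obtain by a leading-order analysis of the similarity-reduced Toda ODE.

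For the ``if'' direction, suppose \(\alpha_j(m)\ge -1\) for all \(j=0,\dots,l\). For \(1\le j\le l\) I set \(k_j:=(\alpha_j(m)+1)/s-1\); the hypothesis \(\alpha_j(m)\ge -1\) then gives \(k_j\ge -1\). The remaining inequality \(\alpha_0(m)\ge -1\), which by \(\alpha_0=-\psi\) reads \(\psi(m)\le 1\), is exactly the admissibility condition for the highest-root term \(e_\psi\): its exponent \(k_0\), tied to \(k_1,\dots,k_l\) through the affine relation \(\alpha_0+\sum_{j=1}^l q_j\alpha_j=0\) on \(\mathfrak h\), can then be taken \(\ge -1\) as well. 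With all \(k_j\ge -1\), the construction recalled above applied to \(\omega=\frac1\lambda(\sum_{j=1}^l z^{k_j}e_{-\alpha_j}+z^{k_0}e_\psi)dz\) produces a local solution whose asymptotics are \(w(|t|)\sim -m\log|t|\) by the stated relation \(\alpha_j(m)=s(k_j+1)-1\).

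For the ``only if'' direction I would argue by contradiction at the level of the radial ODE. By the similarity condition (S), \(w=w(x)\) with \(x=|t|\), and \(2w_{t\bar t}=\tfrac12(w''+x^{-1}w')\), so the Toda equation becomes \(\tfrac12(w''+x^{-1}w')=-\sum_{j=1}^l d_j e^{-2\alpha_j(w)}H_{\alpha_j}-d_0 e^{2\psi(w)}H_{-\psi}\). If \(w\sim -m\log x\), the leading logarithm is harmonic, so the left-hand side is subleading, while the forcing terms behave like \(d_j e^{-2\alpha_j(w)}H_{\alpha_j}\sim x^{2\alpha_j(m)}\) for \(1\le j\le l\) and \(d_0 e^{2\psi(w)}H_{-\psi}\sim x^{2\alpha_0(m)}\), using \(\alpha_0=-\psi\). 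Solving \(w''+x^{-1}w'\sim x^{2\alpha_j(m)}\) produces a correction of size \(x^{2\alpha_j(m)+2}\); if some \(\alpha_j(m)<-1\) this is a negative power of \(x\), a power-law blow-up dominating \(\log x\), contradicting \(w\sim -m\log x\). Hence \(\alpha_j(m)\ge -1\) for every \(j=0,\dots,l\).

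The main obstacle is the analytic content underlying the ``if'' direction: the loop-group (Iwasawa-type) factorization for the split real form \(\g_{\R}\) that turns the holomorphic potential \(\omega\) into an actual local solution and certifies the precise logarithmic leading term with controlled remainder — in particular the borderline cases \(\alpha_j(m)=-1\) (where \(k_j=-1\) and \(\omega\) has a simple pole, so the residue must be semisimple to avoid \((\log x)^2\) growth). I would invoke this from \cite{GH} (following \cite{GIL} in type \(A_n\)) rather than reproduce it, and likewise rely on the decay estimates there to promote the leading-order ODE computation above into a rigorous argument.
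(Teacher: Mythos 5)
The paper gives no proof of Proposition \ref{pro4} at all: it is imported verbatim from \cite{GH}, so your attempt can only be measured against its own internal logic and against the loop-group argument of \cite{GH} that both you and the paper ultimately cite. Within that frame, your ``if'' direction is correct, and you correctly spotted the one subtle point: read literally, the paper's formula \(\alpha_{j}(m)=s(k_{j}+1)-1\), stated only for \(1\leq j\leq l\), makes \(m\) independent of \(k_{0}\), which would contradict the ``only if'' half. The formulas are consistent only under the implicit normalization \(\sum_{j=0}^{l}q_{j}(k_{j}+1)=1\) (with \(q_{0}=1\)), under which the relation extends to \(j=0\) with \(\alpha_{0}=-\psi\); your choice \(k_{j}=(\alpha_{j}(m)+1)/s-1\) for \emph{all} \(j=0,\dots,l\) then satisfies this constraint automatically, because \(\sum_{j=0}^{l}q_{j}\alpha_{j}(m)=0\) and \(\sum_{j=0}^{l}q_{j}=s\), and is admissible exactly when every \(\alpha_{j}(m)\geq -1\). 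That bookkeeping is right.

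The genuine gap is in the ``only if'' direction. You integrate each forcing term as if the system were diagonal, but the \(l+1\) forcing directions lie in the \(l\)-dimensional space \(\mathfrak{h}_{\sharp}\) and are linearly dependent: \(H_{-\psi}=-H_{\psi}=-\sum_{j=1}^{l}q_{j}^{\vee}H_{\alpha_{j}}\) with all comarks \(q_{j}^{\vee}\geq 1\). Hence in the coefficient of \(H_{\alpha_{k}}\) the radial equation reads \(\tfrac12(v_{k}''+x^{-1}v_{k}')=-d_{k}e^{-2\alpha_{k}(w)}+d_{0}q_{k}^{\vee}e^{2\psi(w)}\): the highest-root term enters \emph{every} component, with the opposite sign to the simple-root term there (you also never fix the signs of the \(d_{j}\)). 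So if \(\alpha_{0}(m)=\alpha_{k}(m)=\min_{j}\alpha_{j}(m)<-1\), the two divergent forcings in component \(k\) have the same order \(x^{2\min_{j}\alpha_{j}(m)+o(1)}\) and may cancel at leading order, and your key step ``solving \(w''+x^{-1}w'\sim x^{2\alpha_{j}(m)}\) produces a correction of size \(x^{2\alpha_{j}(m)+2}\)'' is not justified for the coupled system. The step is repairable, but it needs an idea you did not supply: with \(\mu=\min_{0\leq j\leq l}\alpha_{j}(m)<-1\), choose a component in which exactly one term attains the exponent \(\mu\). If the minimum is attained only at indices \(j\geq1\), any minimizing component works; if it is attained at \(j=0\), there must exist \(k\geq1\) with \(\alpha_{k}(m)>\mu\) (otherwise \(\alpha_{j}(m)=\mu\) for all \(j\geq1\) and \(\alpha_{0}(m)=-\sum_{j}q_{j}\alpha_{j}(m)\) forces \(s\mu=0\), impossible), and in that component the \(\psi\)-term dominates with fixed sign. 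Only after this selection does double integration of a one-signed forcing give the power-law blow-up \(x^{2\mu+2+o(1)}\) contradicting \(w\sim-m\log x\); pleasingly, the relation \(\sum_{j=0}^{l}q_{j}\alpha_{j}(m)=0\) you used in the ``if'' direction is exactly what rules out the degenerate all-minimal case. A smaller point: your parenthetical claim that for \(k_{j}=-1\) the residue of \(\omega\) ``must be semisimple'' is off the mark --- in the solution relevant to the paper's main theorem (\(k_{1}=\dots=k_{l}=-1\), \(k_{0}=0\)) the residue \(\sum_{j=1}^{l}e_{-\alpha_{j}}\) is nilpotent, and the asymptotics nevertheless hold by \cite{GH}.
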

The condition \(\alpha_{j}(m)\geq -1\) for \(j=0,\cdots,l\) is equivalent to the condition defining the fundamental Weyl alcove \(\mathfrak{A}=\{x\in{\sqrt{-1}\mathfrak{h}_{\sharp}}|\;\alpha_{j}^{\text{real}}(x)\geq 0,\;\psi^{\text{real}}(x)\leq 1\}\). This gives :
\begin{thm}\cite{GH}\label{thm5}
	We have a bijective map between\\
	(a) the space of asymptotic data \(\mathcal{A}=\{m\in{\mathfrak{h}_{\sharp}}|\;\alpha_{j}(m)\geq -1,\; j=0,\cdots, l\}\) when \(G\neq A_{n},D_{2m+1},E_{6}\) (or the set \(\mathcal{A}^{\sigma}=\{m\in{\mathcal{A}|\;\sigma(m)=m}\}\) when \(G=A_{n},D_{2m+1},E_{6}\)) and\\
	(b) the fundamental Weyl alcove \(\mathfrak{A}\) (or \(\mathfrak{A}^{\sigma}=\{x\in{\mathfrak{A}}|\;\sigma(x)=x\}\)) defined by
	\[\mathcal{A}\rightarrow \mathfrak{A},\;m\mapsto\frac{2\pi\sqrt{-1}}{s}(m+h_{0}),\;\;(or\;\mathcal{A}^{\sigma}\rightarrow \mathfrak{A}^{\sigma}).
	\]
\end{thm}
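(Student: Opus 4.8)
The plan is to treat the stated map $\Phi\colon m\mapsto \frac{2\pi\sqrt{-1}}{s}(m+h_{0})$ as an affine map and to reduce the theorem to a direct translation of defining inequalities. First I would observe that $\Phi$ is the composition of the translation $m\mapsto m+h_{0}$ with multiplication by the scalar $\frac{2\pi\sqrt{-1}}{s}$; since $s\in\R^{\times}$, this scalar carries $\mathfrak{h}_{\sharp}$ bijectively onto $\sqrt{-1}\,\mathfrak{h}_{\sharp}$, so $\Phi$ is already an affine isomorphism of the ambient real vector spaces. Bijectivity as a map of spaces is therefore automatic, and the genuine content of the theorem is the set-theoretic identity $\Phi(\mathcal{A})=\mathfrak{A}$, namely that $\Phi$ matches the $l+1$ half-spaces cutting out $\mathcal{A}$ with the $l+1$ half-spaces cutting out $\mathfrak{A}$.

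Next I would record the two evaluations that drive the computation. Because $\epsilon_{j}$ is the basis of $\mathfrak{h}$ dual to the simple roots, $h_{0}=\sum_{k=1}^{l}\epsilon_{k}$ satisfies $\alpha_{j}(h_{0})=1$ for $1\le j\le l$, and hence $\psi(h_{0})=\sum_{j=1}^{l}q_{j}\,\alpha_{j}(h_{0})=\sum_{j=1}^{l}q_{j}=s-1$. Writing $x=\Phi(m)$ and using the normalization $\alpha_{j}^{\text{real}}=\frac{1}{2\pi\sqrt{-1}}\alpha_{j}$ for the real-valued functional induced by $\alpha_{j}$ on $\sqrt{-1}\,\mathfrak{h}_{\sharp}$, I would then compute, for $1\le j\le l$, that $\alpha_{j}^{\text{real}}(x)=\frac{1}{s}\bigl(\alpha_{j}(m)+1\bigr)$, so that $\alpha_{j}^{\text{real}}(x)\ge 0$ is equivalent to $\alpha_{j}(m)\ge -1$. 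For the affine node I would use the identification $\alpha_{0}=-\psi$ coming from the $H_{-\psi}$ term of the Toda equation; then $\psi^{\text{real}}(x)=\frac{1}{s}\bigl(\psi(m)+s-1\bigr)=1+\frac{\psi(m)-1}{s}$, so that $\psi^{\text{real}}(x)\le 1$ is equivalent to $\psi(m)\le 1$, i.e.\ to $\alpha_{0}(m)\ge -1$. Combining these $l+1$ equivalences yields $m\in\mathcal{A}\iff x\in\mathfrak{A}$, which is precisely $\Phi(\mathcal{A})=\mathfrak{A}$, and restricting the ambient affine isomorphism gives the claimed bijection.

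Finally, for the cases $G=A_{n},D_{2m+1},E_{6}$ I would verify that $\Phi$ intertwines $\sigma$. Since $e_{0}$ is the highest weight vector of the adjoint summand $\langle e_{0},h_{0},f_{0}\rangle$, it equals one of the $u_{j}$, so $\sigma(e_{0})=-e_{0}$; combined with $\sigma(f_{0})=-f_{0}$ and the $\C$-linearity of $\sigma$ this gives $\sigma(h_{0})=\sigma([e_{0},f_{0}])=[-e_{0},-f_{0}]=h_{0}$. Consequently $\sigma\Phi(m)=\frac{2\pi\sqrt{-1}}{s}(\sigma(m)+h_{0})=\Phi(\sigma(m))$, so $\Phi$ is $\sigma$-equivariant and restricts to a bijection $\mathcal{A}^{\sigma}\to\mathfrak{A}^{\sigma}$. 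The linear-algebra core is routine; I expect the main obstacle to be purely a matter of conventions, namely pinning down the correct normalization of $\alpha_{j}^{\text{real}}$ and $\psi^{\text{real}}$, the identification $\alpha_{0}=-\psi$ of the affine node, and the equality $\sigma(h_{0})=h_{0}$ needed for the $\sigma$-fixed statement.
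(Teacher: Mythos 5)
Your proposal is correct and takes essentially the same approach as the paper: the paper treats the theorem (cited from \cite{GH}) as an immediate translation of defining inequalities, noting that $\alpha_{j}(m)\geq -1$ for $j=0,\cdots,l$ is equivalent to the alcove conditions under $m\mapsto\frac{2\pi\sqrt{-1}}{s}(m+h_{0})$, which is exactly your computation via $\alpha_{j}(h_{0})=1$, $\psi(h_{0})=s-1$, and $\alpha_{0}=-\psi$. Your additional verification of $\sigma$-equivariance through $\sigma(h_{0})=h_{0}$ fills in a detail the paper leaves implicit.
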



\subsection{Dubrovin connection}
\hspace{1pt}\\
\indent In this subsection, we review briefly the definition of (small) quantum cohomology and the corresponding Dubrovin connection. As we need only the case of compact K\"{a}hler homogeneous spaces, we can use a naive definition of Gromov-Witten invariants. We use the same notation in \cite{G2}. We set the coefficients of homology groups and cohomology rings to be \(\C\).\\
\indent Let \(M\) be such a complex manifold. Let \(p_{1},p_{2},p_{3}\) be three distinct points in \(\C P^{1}\). Let \(A,B,C\) be (generic representatives of) homology classes of \(H_{*}(M)\) and \(D\) be an element of \(H_{2}(M; \mathbb{Z})=\pi_{2}(M)\). We define 
\[\text{Hol}_{D}^{A,p_{1}}=\{\text{holomorphic maps}\;f:\C P^{1}\rightarrow M|\;f(p_{1})\in{A},\;[f]=D\}
\] 
where \([f]\) means the homotopy class of \(f\). \(\text{Hol}_{D}^{B,p_{2}},\text{Hol}_{D}^{C,p_{3}}\) are defined in the same way.
\begin{defi} Gromov-Witten invariants are defined by 
\[\langle A|B|C\rangle_{D}=\sharp \text{Hol}_{D}^{A,p_{1}}\cap\text{Hol}_{D}^{B,p_{2}}\cap\text{Hol}_{D}^{C,p_{3}}.
\]
\end{defi}
We define the quantum product for \(M\) as follows.
\begin{defi}
For \(C\in{H_{*}(M)}\) and \(\tau\in{H^{2}(M)}\), \(a\circ_{\tau}b\) is defined by
\[\langle a\circ_{\tau}b,C\rangle=\sum_{D\in{H_{2}(M)}}\langle A|B|C\rangle_{D}e^{\langle \tau,D\rangle}
\]
where \(A,B\) are the Poincar\'{e} dual homology classes to \(a,b\) and \(\langle,\rangle\) is the pairing between \(H^{*}(M)\) and \(H_{*}(M)\).
\end{defi}
We call \((H^{*}(M),\circ_{\tau})\) the quantum cohomology of \(M\) and denote it by \(QH^{*}(M)\). We denote \(\text{dim}H^{2}(M)\) by \(r\) and take the basis \(A_{1},\cdots, A_{r}\) of \(H_{2}(M)\) and the basis \(b_{1},\cdots, b_{r}\) of \(H^{2}(M)\) such that \(\langle b_{i},A_{j}\rangle=\delta_{ij}\). Let \(\tau=\tau_{1}b_{1}+\cdots+ \tau_{r}b_{r}\) and \(D=D_{1}A_{1}+\cdots+D_{r}A_{r}\) where \(\tau_{i},D_{i}\in{\C}\) for all \(i\). Then we have \(e^{\langle \tau, D\rangle}=e^{\sum_{j=1}^{r}\tau_{j}D_{j}}\).\\
\indent Finally we define the Dubrovin connection by using the quantum product \(\circ_{\tau}\).
\begin{defi}
The Dubrovin connection on the trivial vector bundle \(H^{2}(M;\C)\times H^{*}(M;\C)\rightarrow H^{2}(M;\C)\) is defined by 
\[
\nabla=d+\frac{1}{\lambda}\sum_{j=1}^{r}A_{j}(\tau)d\tau_{j}
\] 
where \(A_{i}(\tau)\) are the operators given by the quantum product \(b_{i}\circ_{\tau}\).
\end{defi}
For minuscule flag manifolds cases, we have \(r=1\). It is convenient to write the Dubrovin connection form as \(\frac{1}{\lambda}A(q)\frac{dq}{q}\) by changing the variable to \(q=e^{\tau_{1}}\). We seek flag manifolds whose Dubrovin connection forms are of the form \(\omega=\frac{1}{\lambda}(\sum_{j=1}^{l}q^{k_{j}}e_{-\alpha_{j}}+q^{k_{0}}e_{\psi})dq\).\\


\subsection{Minuscule weights and homogeneous spaces}
\hspace{1pt}\\
\indent We review some properties of minuscule weights. We refer to the article \cite{CMP}. For a simple complex Lie algebra, we define the weight lattice \(I\) as the \(\mathbb{Z}\)-module spanned by \(\lambda_{1},\cdots,\lambda_{l}\) where \(\lambda_{i}\) is defined by \((\lambda_{i},\alpha^{\vee}_{j})=\delta_{ij}\). These \(\lambda_{i}\) are called the fundamental weights.
\begin{defi}
We call a weight \(\lambda\) a dominant weight if \((\lambda,\alpha^{\vee}_{i})>0\) for all \(\alpha_{i}\in{\Pi}\). We call a dominant weight \(\lambda\) a minuscule weight if \((\lambda,\alpha^{\vee})\leq 1\) for all \(\alpha\in{\triangle^{+}}\).
\end{defi}
It is well-known that the minuscule weights are a subset of the fundamental weights. We summarize the minuscule weights for each types of Lie groups at the end of this subsection.\\
\indent By the Borel-Weil theorem, we can obtain an irreducible representation \(V_{\lambda_{i}}\) from each fundamental weight \(\lambda_{i}\). When we consider the projective representation \(\mathbb{P}(V_{\lambda_{i}})\), we obtain the homogeneous space 
\[G/P_{i}\cong G\cdot [v_{\lambda_{i}}]\subset{\mathbb{P}(V_{\lambda_{i}})}
\]
where \(v_{\lambda_{i}}\) is a highest weight vector and \(P_{i}\) is the stabilizer group of \([v_{\lambda_{i}}]\). Here \(P_{i}\) is a parabolic subgroup.\\
\indent We denote the weight orbit of \(\lambda_{i}\) by \(W\cdot \lambda_{i}\). That is \(W\cdot \lambda_{i}=\{x(\lambda_{i})|\;x\in{W}\}\). When we write \(x\) as a product of simple reflections, we denote by \(\ell(x)\) the minimal length of \(x\) in \(W\). The following fact holds for any parabolic subgroup \(P\) of \(G\). Let \(\triangle_{P}\) be the subset of \(\triangle\) such that Lie\((P)=\mathfrak{h}\oplus \bigoplus_{\alpha\in{\triangle_{P}}}\g_{\alpha}\) We denote the subset of the simple roots which belong to \(\triangle_{P}\) by \(\Pi_{P}\). Let \(W_{P}\) be the subgroup of \(W\) generated by the elements of \(\Pi_{P}\).
\begin{pro}(see section 1.10 in \cite{Hum})\label{pro9}
	For \(x\in{W}\), there exist unique elements \(u\in{W^{P}}\) and \(v\in{W_{P}}\) such that 
	\[
	x=uv
	\] 
	where \(W^{P}=\{x\in{W}|\; \ell(xs_{\alpha})>\ell(x)\;{}^{\forall}\alpha\in{\Pi_{P}}\}\).
\end{pro}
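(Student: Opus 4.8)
The plan is to realize $u$ as the minimal-length representative of the coset $xW_P$ and to reduce the whole statement to a single length-additivity lemma. First I would record the standard length criterion at a simple reflection: for any $u\in W$ and $\alpha\in\Pi$ one has $\ell(us_{\alpha})>\ell(u)$ precisely when $u(\alpha)\in\triangle^{+}$. In particular $W^{P}=\{u\in W:\ u(\alpha)\in\triangle^{+}\ \text{for all }\alpha\in\Pi_{P}\}$. For existence, given $x\in W$ I would take $u$ to be an element of minimal length in the finite set $xW_{P}$ and set $v=u^{-1}x\in W_{P}$, so that $x=uv$; minimality forces $\ell(us_{\alpha})\ge\ell(u)$ for every $\alpha\in\Pi_{P}$, and since the length changes by exactly $\pm 1$ this gives $\ell(us_{\alpha})=\ell(u)+1$, i.e. $u\in W^{P}$.

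The crux is the lemma: if $u\in W^{P}$ and $v\in W_{P}$, then $\ell(uv)=\ell(u)+\ell(v)$. I would prove it through inversion sets, writing $T(w)=\{\gamma\in\triangle^{+}:\ w(\gamma)\in\triangle^{-}\}$ so that $\ell(w)=|T(w)|$, and letting $\Phi_{P}:=\triangle\cap\Z\Pi_{P}$ be the root subsystem attached to $W_{P}$, with positive part $\Phi_{P}^{+}=\Phi_{P}\cap\triangle^{+}$. Two ingredients feed the computation. The first is standard: every $v\in W_{P}$ permutes $\triangle^{+}\setminus\Phi_{P}^{+}$, so $T(v)\subseteq\Phi_{P}^{+}$ and the length of $v$ in $W$ agrees with its length in $W_{P}$. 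The second, which is the main obstacle, is the claim that $u\in W^{P}$ forces $u(\Phi_{P}^{+})\subseteq\triangle^{+}$, i.e. $T(u)\cap\Phi_{P}^{+}=\emptyset$; note this is genuinely stronger than the defining condition $u(\Pi_{P})\subseteq\triangle^{+}$. I would establish it by induction on the height of $\gamma\in\Phi_{P}^{+}$ measured inside $\Phi_{P}$: if $\gamma$ is not $\Phi_{P}$-simple, choose $\alpha\in\Pi_{P}$ with $\langle\gamma,\alpha^{\vee}\rangle>0$ so that $\gamma'=s_{\alpha}\gamma\in\Phi_{P}^{+}$ has smaller height and $\gamma=\gamma'+\langle\gamma,\alpha^{\vee}\rangle\alpha$; then $u(\gamma)=u(\gamma')+\langle\gamma,\alpha^{\vee}\rangle\,u(\alpha)$ is a nonnegative integral combination of simple roots by the inductive hypothesis, and being a root it must be positive.

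Granting these two ingredients, a direct sign analysis of $uv(\gamma)$ for $\gamma\in\triangle^{+}$ yields the disjoint union $T(uv)=T(v)\sqcup v^{-1}(T(u))$, where the bijection $\gamma\mapsto v(\gamma)$ identifies the second piece with $T(u)$; hence $\ell(uv)=\ell(u)+\ell(v)$. The lemma then finishes the proposition: for $u\in W^{P}$ and $v\in W_{P}$ we have $\ell(uv)=\ell(u)+\ell(v)\ge\ell(u)$ with equality iff $v=e$, so the element of $W^{P}$ lying in a coset $xW_{P}$ is exactly its unique minimal-length representative. Thus if $x=uv=u'v'$ with $u,u'\in W^{P}$ and $v,v'\in W_{P}$, both $u$ and $u'$ are that minimal representative, forcing $u=u'$ and then $v=v'$; existence was arranged above. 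As a byproduct one also obtains the length-additivity $\ell(x)=\ell(u)+\ell(v)$, which is what makes this decomposition useful for the cell-decomposition and cohomology arguments later in the paper.
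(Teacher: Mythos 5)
Your proof is correct, but there is nothing in the paper to compare it against: the paper does not prove Proposition \ref{pro9} at all, it simply quotes the statement with a citation to section 1.10 of Humphreys \cite{Hum}. What you have written is a complete, self-contained proof of the cited fact. Your route is the root-theoretic one: existence via the minimal-length representative of the coset \(xW_{P}\), and uniqueness via the length-additivity lemma \(\ell(uv)=\ell(u)+\ell(v)\) for \(u\in W^{P}\), \(v\in W_{P}\), which you prove by the inversion-set decomposition \(T(uv)=T(v)\sqcup v^{-1}(T(u))\). The one genuinely non-obvious ingredient there is your second claim, that \(u\in W^{P}\) sends all of \(\Phi_{P}^{+}\) (not merely \(\Pi_{P}\)) into \(\triangle^{+}\); your height induction inside \(\Phi_{P}\) closes this correctly, and you are right to flag it as strictly stronger than the defining condition of \(W^{P}\). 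This argument is essentially the classical reflection-group proof (close in spirit to Humphreys' Chapter 1 treatment, which also works with the count of inverted positive roots), as opposed to the exchange/deletion-condition proofs valid for arbitrary Coxeter groups; your version uses finiteness of the root system but fits naturally into the Lie-theoretic framework of this paper. A useful byproduct, which the bare statement of Proposition \ref{pro9} omits, is the identity \(\ell(x)=\ell(u)+\ell(v)\): this is exactly what justifies treating \(\ell\) on \(W^{P_{i}}\) as the restriction of \(\ell\) on \(W\), which the paper relies on implicitly in Lemma \ref{key} and Corollary \ref{cor16}.
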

\noindent By this fact, \(u\) is a representative of \([x]\in{W/W_{P}}\). We have \(W\cdot \lambda_{i}=W^{P_{i}}\cdot \lambda_{i}\).\\
\indent We shall consider the cohomology ring of \(G/P_{i}\). The following fact is well-known.
\begin{thm}(Bruhat decomposition)\cite{Hil}
	For a parabolic subgroup \(P\) of \(G\), we have a decomposition 
	\[G=\coprod_{u\in{W^{P}}}BuP.
	\]
\end{thm}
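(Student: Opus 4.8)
The plan is to derive the parabolic decomposition from the ordinary Bruhat decomposition (the case \(P=B\)) together with the combinatorial factorization of Proposition \ref{pro9}. Throughout I write \(\dot w\) for a representative of \(w\in W\) in the normalizer of the maximal torus, and I take as given the ordinary Bruhat decomposition \(G=\coprod_{w\in W}B\dot wB\), which is the special case of the statement with \(P=B\) and which itself follows from the \((B,N)\)-pair structure of \(G\). Since the double coset \(B\dot wB\) is independent of the chosen representative (as \(T\subset B\)), this causes no ambiguity.

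First I would record two standard facts. The standard parabolic subgroup \(P\) is itself a union of Bruhat cells, namely \(P=\coprod_{v\in W_{P}}B\dot vB\); this is immediate once one knows that \(P\) is generated by \(B\) and the reflections \(s_{\alpha}\) with \(\alpha\in\Pi_{P}\). Second, the cells multiply according to the monoid rule \((B\dot xB)(B\dot yB)=B\dot x\dot yB\) whenever \(\ell(xy)=\ell(x)+\ell(y)\); this is the usual multiplication law for a \((B,N)\)-pair, proved by induction on \(\ell(y)\) using the rank-one relations.

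With these in hand, the main step is to show that for each \(u\in W^{P}\),
\[
B\dot uP=\coprod_{v\in W_{P}}B\dot u\dot vB.
\]
Substituting \(P=\coprod_{v\in W_{P}}B\dot vB\) gives \(B\dot uP=\bigcup_{v\in W_{P}}B\dot uB\dot vB=\bigcup_{v\in W_{P}}(B\dot uB)(B\dot vB)\), where the middle equality uses \(BB=B\). I then apply the multiplication rule together with the length-additivity \(\ell(uv)=\ell(u)+\ell(v)\), which holds for \(u\in W^{P}\) and \(v\in W_{P}\) by the minimal-length coset representative property (section 1.10 of \cite{Hum}), to identify each term with \(B\dot u\dot vB\). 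The union is disjoint because the ordinary Bruhat cells are pairwise disjoint and the products \(uv\) are distinct as \(v\) ranges over \(W_{P}\).

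Finally I would assemble the pieces. By Proposition \ref{pro9} every \(x\in W\) factors uniquely as \(x=uv\) with \(u\in W^{P}\) and \(v\in W_{P}\), so \(W=\coprod_{u\in W^{P}}uW_{P}\). Substituting into the ordinary Bruhat decomposition and regrouping yields
\[
G=\coprod_{x\in W}B\dot xB=\coprod_{u\in W^{P}}\ \coprod_{v\in W_{P}}B\dot u\dot vB=\coprod_{u\in W^{P}}B\dot uP,
\]
which is the claim; disjointness of the outer union again follows from the uniqueness in Proposition \ref{pro9}, since distinct \(u\in W^{P}\) index disjoint families of Weyl group elements and hence disjoint families of cells. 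The one point that genuinely requires care---and which I would treat as the main obstacle---is the cell multiplication rule together with the length-additivity of the \(W^{P}\)-\(W_{P}\) factorization; once these are granted, the parabolic decomposition is a formal regrouping of the ordinary one.
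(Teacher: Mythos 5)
Your proof is correct, but note that the paper itself offers no proof of this statement: it is quoted as a known result with a citation to Hiller's \emph{Geometry of Coxeter Groups}, so there is no ``paper's argument'' to match against. What you give is the standard textbook derivation, and it is sound: reduce the parabolic case to the full flag case \(P=B\) via the monoid multiplication rule \((B\dot xB)(B\dot yB)=B\dot x\dot yB\) when \(\ell(xy)=\ell(x)+\ell(y)\), the identity \(P=\coprod_{v\in W_{P}}B\dot vB\), and the unique factorization \(x=uv\), \(u\in W^{P}\), \(v\in W_{P}\). One point worth flagging: the paper's Proposition \ref{pro9} states only existence and uniqueness of the factorization, not the length additivity \(\ell(uv)=\ell(u)+\ell(v)\) that your cell-multiplication step requires; you correctly source that from section 1.10 of \cite{Hum}, where it is indeed proved, so this is a citation matter rather than a gap. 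The other ingredients you take as given --- the case \(P=B\), the \((B,N)\)-pair multiplication law, and the fact that the group-theoretically defined parabolic \(P\) (stabilizer of \([v_{\lambda_i}]\), or equivalently the subgroup with Lie algebra \(\mathfrak{h}\oplus\bigoplus_{\alpha\in\triangle_P}\g_\alpha\)) coincides with \(BW_{P}B\) --- are genuine inputs from structure theory that your argument does not supply, but assuming them is reasonable for a statement of this kind, and your disjointness arguments (inner, from distinctness of the products \(uv\); outer, from uniqueness in Proposition \ref{pro9}) are both complete and correct.
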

We define the Schubert varieties of \(G/P_{i}\) by \(X_{u}:=\overline{BuP_{i}/P_{i}}\). We also define the opposite Schubert varieties by \(Y_{u}:=\overline{x_{0}Bx_{0}uP_{i}/P_{i}}=x_{0}X_{x_{0}u}\) where \(x_{0}\) is the longest element of \(W\). Then \([Y_{u}]\in{H_{2n-2l(u)}(G/P_{i})}\) and these classes form an additive basis. By the Poincar\'{e} duality theorem, we have a basis of \(H^{2l(u)}(G/P_{i})\). We denote this generator by \(\sigma_{u}\).\\
\indent Now we obtain the correspondence between \(W^{P_{i}}\cdot \lambda_{i}\) and an additive basis of the cohomology \(H^{*}(G/P_{i})\) by 
\[u(\lambda_{i})\longleftrightarrow \sigma_{u}.
\]
In the following table of fundamental weights (figure 1), the minuscule weights are marked.
\begin{figure}[t]
\begin{center}
\includegraphics{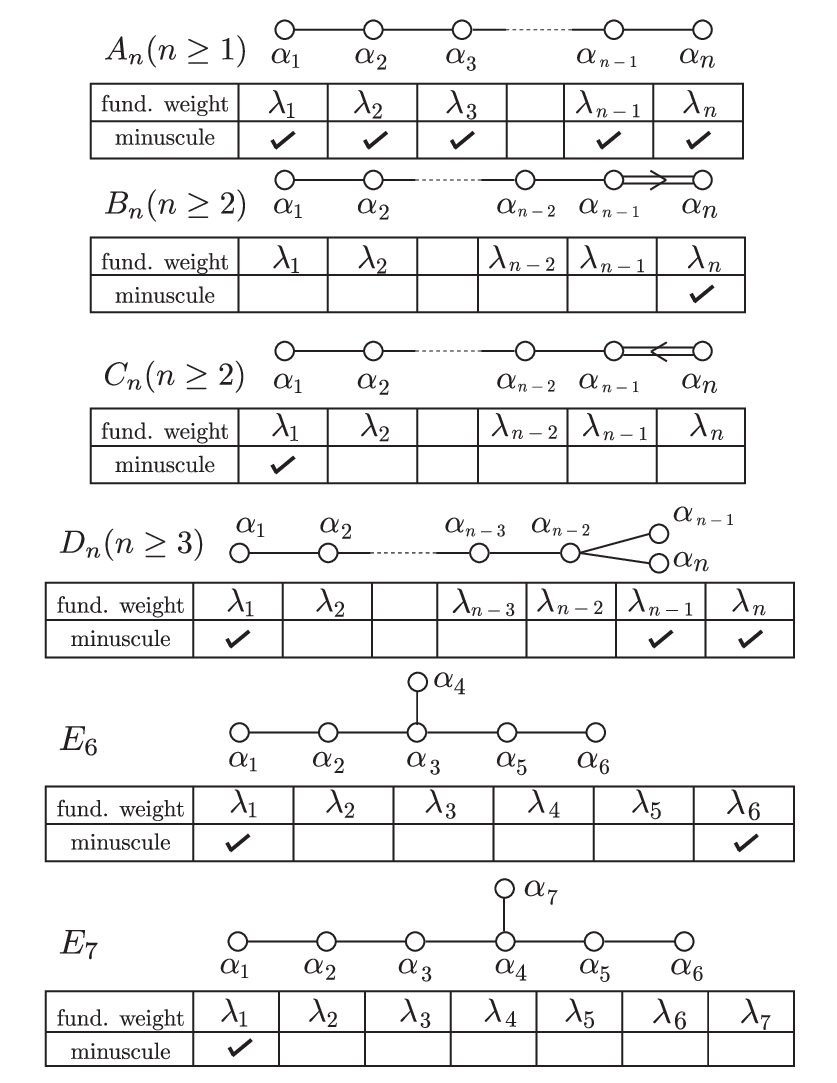}
\end{center}
\caption{fundamental weight}
\end{figure}
It is known that \(G_{2},F_{4}\) and \(E_{8}\) have no minuscule weight. \(G/P_{\lambda_{i}}\) can be described conveniently as a quotient of compact groups as follows.
\begin{align*}
(A_{n}\; \text{case})\;\; &G/P_{i}\cong SU(n+1)/S(U(i)\times U(n+1-i))\cong Gr(k,n+1)\\
(B_{n}\; \text{case})\;\; &G/P_{n}\cong SO(2n+1)/U(n) \cong OG(n,2n+1)\\
(C_{n}\; \text{case})\;\; &G/P_{1}\cong Sp(n)/U(1)\times Sp(n-1)\cong \C P^{2n-1}\\
(D_{n}\; \text{case})\;\; &G/P_{1}\cong SO(2n)/U(1)\times SO(2n-2) \cong Q_{2n-2},\\
&G/P_{n-1}\cong SO(2n)/U(n)\cong S_{+},G/P_{n}\cong SO(2n)/U(n)\cong S_{-}\\
(E_{6}\; \text{case})\;\; &G/P_{1}\cong G/P_{6}\cong E_{6}/SO(10)\times U(1)\cong \mathbb{O}P^{2}\\
(E_{7}\; \text{case})\;\; &G/P_{1}\cong E_{7}/E_{6}\times U(1)
\end{align*}
Here \(OG(k,n)\) is the set of \(k\)-dimensional isotropic subspaces of \(n\)-dimensional complex vector space \(V\) with a nondegenerate quadratic form. This is called the orthogonal Grassmannian. For \(D_{n}\), \(OG(n,2n)\) has two components \(S_{+}\) and \(S_{-}\). These are called varieties of pure spinors (or spinor varieties) and these are isomorphic to each other \cite{Ma}. For \(A_{n},B_{n},C_{n}\) and \(D_{n}\), the minuscule representations are familiar (see section 6.5 in \cite{BD}). For \(A_{n}\), \(V_{\lambda_{i}}\) is the exterior power \(\bigwedge^{i}V_{\lambda_{1}}\) (\(1\leq i\leq n\)) where \(V_{\lambda_{1}}\) is the standard representation on \(\C^{n+1}\). For \(B_{n}\), \(V_{n}\) is the half-spin representation. For \(C_{n}\), \(V_{\lambda_{1}}\) is the standard representation on \(\C^{2n}\). For \(D_{n}\), \(V_{\lambda_{1}}\) is the standard representation on \(\C^{2n}\). \(V_{\lambda_{n-1}}\) and \(V_{\lambda_{n}}\) are the half-spin representations. We denote these two representations by \(\Delta_{+}\) and \(\Delta_{-}\). For exceptional groups, the minuscule representations are given in the section 5 of \cite{Gec}. For \(E_{6}\), \(V_{\lambda_{1}}\) and \(V_{\lambda_{6}}\) are \(27\) dimensional representations. For \(E_{7}\), \(V_{\lambda_{1}}\) is a \(56\) dimensional  representation.
\\

\subsection{Minuscule weight orbits and simple roots}
\hspace{1pt}\\
\indent In this subsection, we observe relationships between minuscule weight orbits and the simple roots. Let \(\lambda_{i}\) be a minuscule weight.
\begin{pro}\label{pro6}
The set of all weights of \(V_{\lambda_{i}}\) is the \(W\)-orbit of \(\lambda_{i}\) and the multiplicities of all weights of \(V_{\lambda_{i}}\) are one.
\end{pro}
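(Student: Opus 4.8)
The plan is to prove the two assertions separately, deducing the multiplicity statement from the orbit statement. Write $\Pi$ for the set of weights of $V_{\lambda_{i}}$. The inclusion $W\cdot\lambda_{i}\subseteq\Pi$ is automatic: $\lambda_{i}$ is the highest weight, hence a weight, and $\Pi$ is stable under $W$. The content is the reverse inclusion $\Pi\subseteq W\cdot\lambda_{i}$, and this is precisely where minusculeness is used.

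First I would recall the standard fact that $\Pi$ is a \emph{saturated} set with highest weight $\lambda_{i}$: it is $W$-stable, and for every $\mu\in\Pi$, every root $\alpha$, and every integer $n$ with $0\le n\le(\mu,\alpha^{\vee})$, one has $\mu-n\alpha\in\Pi$ (a standard fact in highest-weight representation theory; see e.g. \cite{Hum}). The main step is then to prove that $W\cdot\lambda_{i}$ is \emph{itself} saturated. Fix $\mu=w\lambda_{i}\in W\cdot\lambda_{i}$ and a root $\alpha$. Because $\lambda_{i}$ is dominant and minuscule, $(\lambda_{i},\beta^{\vee})\in\{-1,0,1\}$ for every root $\beta$ (for $\beta\in\triangle^{+}$ this is $0\le(\lambda_{i},\beta^{\vee})\le 1$, and the negative roots give the value $-1$ or $0$). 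Applying this to $\beta=w^{-1}\alpha$ gives $(\mu,\alpha^{\vee})=(\lambda_{i},(w^{-1}\alpha)^{\vee})\in\{-1,0,1\}$. Hence the saturation condition is vacuous unless $(\mu,\alpha^{\vee})=1$, in which case it only requires $\mu-\alpha\in W\cdot\lambda_{i}$; but $\mu-\alpha=s_{\alpha}(\mu)\in W\cdot\lambda_{i}$ since the orbit is $W$-stable. Thus $W\cdot\lambda_{i}$ is saturated, and since $w\lambda_{i}\preceq\lambda_{i}$ for all $w\in W$, it has highest weight $\lambda_{i}$.

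To conclude $\Pi=W\cdot\lambda_{i}$ I would invoke the standard fact that any saturated set with highest weight $\lambda_{i}$ contains every dominant weight $\preceq\lambda_{i}$. Applying this to the saturated set $W\cdot\lambda_{i}$, whose only dominant element is $\lambda_{i}$, shows that $\lambda_{i}$ is the \emph{unique} dominant weight $\preceq\lambda_{i}$. Since $\Pi$ consists exactly of the $W$-conjugates of the dominant weights $\preceq\lambda_{i}$, it follows that $\Pi=W\cdot\lambda_{i}$, proving the first assertion.

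For the multiplicity statement, the highest weight space $(V_{\lambda_{i}})_{\lambda_{i}}$ is one-dimensional. A representative of $w\in W$ in the normalizer of the maximal torus carries the weight space $(V_{\lambda_{i}})_{\mu}$ isomorphically onto $(V_{\lambda_{i}})_{w\mu}$, so weight multiplicities are constant on $W$-orbits. As every weight lies in $W\cdot\lambda_{i}$ by the first part, each weight has multiplicity $\dim(V_{\lambda_{i}})_{\lambda_{i}}=1$. The only genuine obstacle is the saturation check for $W\cdot\lambda_{i}$ in the second paragraph, which is exactly the place where the hypothesis $(\lambda_{i},\alpha^{\vee})\le 1$ is essential; the remaining steps are bookkeeping with standard facts.
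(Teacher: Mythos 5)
Your proof is correct, but it takes a genuinely different route from the paper. The paper argues by a dimension count: since the extreme weight spaces $V_{w(\lambda_{i})}$, $w\in W^{P_{i}}$, are nonzero and pairwise distinct, one always has $\sharp W/W_{P_{i}}\leq \dim_{\C}V_{\lambda_{i}}$, with equality forcing every weight to be extreme and of multiplicity one; the paper then verifies $\sharp W/W_{P_{i}}=\dim_{\C}V_{\lambda_{i}}$ case by case through the classification (types $A_{n}$, $B_{n}$, $C_{n}$, $D_{n}$, $E_{6}$, $E_{7}$), using the known Weyl group orders and the known dimensions of the minuscule representations. Your argument is instead uniform and classification-free: you show that the minuscule condition makes the orbit $W\cdot\lambda_{i}$ a saturated set, so that $\lambda_{i}$ is the only dominant weight $\preceq\lambda_{i}$, and then the standard description of the weights of $V_{\lambda_{i}}$ as $W$-conjugates of dominant weights $\preceq\lambda_{i}$ gives the orbit statement, with multiplicity one following from $W$-invariance of multiplicities. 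Your approach isolates exactly where $(\lambda_{i},\alpha^{\vee})\leq 1$ is used and is arguably more in the spirit of the paper's stated aim of avoiding case-by-case arguments; the paper's approach buys explicit identifications of $W/W_{P_{i}}$ and dimension counts that are also displayed in its table of minuscule flag manifolds. Two small points: the saturated-set facts you quote are from Humphreys' \emph{Introduction to Lie Algebras and Representation Theory} (the paper's \cite{Hum} is his Coxeter groups book, so your citation should be adjusted); and in the saturation check you should note that roots with $(\mu,\alpha^{\vee})=-1$ are handled by replacing $\alpha$ with $-\alpha$ (or by observing $\mu+\alpha=s_{\alpha}(\mu)$), which is immediate but worth a sentence.
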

\begin{proof}
\indent It is obvious that \(\sharp W/W_{P_{i}}\leq \text{dim}(W\cdot v_{\lambda_{i}})\leq \text{dim}(V_{\lambda_{i}})\). If there is a weight which has multiplicity more than one, then \(\sharp W/W_{P_{i}}< \text{dim}V_{\lambda_{i}}\). Therefore by contraposition when we show that \(\sharp W/W_{P_{i}}\) coincides with \(\text{dim}_{\C}V_{\lambda_{i}}\), we obtain the statement of proposition \ref{pro6}.\\
\indent We justify the above claim in each case. We have the orders of all Weyl groups from the table 2 in section 2.11 of \cite{Hum}. For type \(A_{n}\), we have \(\text{dim}_{\C}\bigwedge {}^{i}\C^{n+1}=\binom{n+1}{i}\) (\(1\leq i\leq n\)). On the other hand, for this representation we have \(W/W_{P_{i}}=\mathfrak{S}_{n+1}/(\mathfrak{S}_{i}\times \mathfrak{S}_{n+1-i})\). Therefore we obtain \(\sharp W/W_{P_{i}}=\frac{(n+1)!}{i!(n+1-i)!}=\binom{n+1}{i}\). For type \(B_{n}\), a minuscule representation is the half-spin representation and its dimension is \(2^{n}\). Then \(W/W_{P_{n}}=\mathfrak{S}_{n}\cdot (\mathbb{Z}_{2})^{n}/\mathfrak{S}_{n}\). Hence \(\sharp W/W_{P_{n}}=2^{n}\cdot n!/n!=2^{n}\). For type \(C_{n}\), a minuscule representation is the standard representation \(\C^{2n}\) and its dimension is \(2n\). The corresponding \(W/W_{P_{1}}=\mathfrak{S}_{n}\cdot (\mathbb{Z}_{2})^{n}/\mathfrak{S}_{n-1}\cdot (\mathbb{Z}_{2})^{n-1}\). Hence \(\sharp W/W_{P_{1}}=2^{n}\cdot n!/2^{n-1}\cdot (n-1)!=2n\). For type \(D_{n}\), there are three minuscule representations. These are the standard representations and the two half-spin representations. These dimensions are \(2n\), \(2^{n-1}\), \(2^{n-1}\) respectively. The corresponding \(W/W_{P_{i}}\) (\(i=1,n-1,n\)) are \(\mathfrak{S}_{n}\cdot (\mathbb{Z}_{2})^{n-1}/\mathfrak{S}_{n-1}\cdot (\mathbb{Z}_{2})^{n-2}\), \(\mathfrak{S}_{n}\cdot (\mathbb{Z}_{2})^{n-1}/\mathfrak{S}_{n}\), \(\mathfrak{S}_{n}\cdot (\mathbb{Z}_{2})^{n-1}/\mathfrak{S}_{n}\), and \(\sharp W/W_{P_{i}}\)(\(i=1,n-1,n\)) are \(2n\),  \(2^{n-1}\), \(2^{n-1}\) respectively. For type \(E_{6}\), there are two minuscule representations. These representations are both \(27\) dimensional representations. The corresponding \(W/W_{P_{1}}\) and \(W/W_{P_{6}}\) are both \(W_{E_{6}}/\mathfrak{S}_{5}\cdot (\mathbb{Z}_{2})^{4}\) where \(W_{E_{6}}\) is the Weyl group of \(E_{6}\). Then \(\sharp W_{E_{6}}/\mathfrak{S}_{5}\cdot (\mathbb{Z}_{2})^{4}=2^{7}\cdot 3^{4}\cdot 5/2^{4}\cdot 5!=27\). For type \(E_{7}\), the minuscule representation is a \(56\) dimensional representation. The corresponding \(W/W_{P_{1}}\) is \(W_{E_{7}}/W_{E_{6}}\) where \(W_{E_{7}}\) is the Weyl group of \(E_{7}\). Then \(\sharp W/W_{P_{1}}=2^{10}\cdot 3^{4}\cdot 5\cdot 7/2^{7}\cdot 3^{4}\cdot 5=56\). This completes the proof. 
\end{proof}
From proposition \ref{pro6}, we have the weights of \(V_{\lambda_{i}}\) as \(\{v_{u(\lambda_{i})}|\; u\in{W^{P_{i}}}\}\) and the multiplication of these weights are all one. In addition, we know that the Weyl group is generated by the simple reflections \(\{s_{\alpha_{j}}|\;j\in{\{1,\cdots,l\}}\}\). Therefore all weights can be obtained from  \(\lambda_{i}\) by applying \(\{s_{\alpha_{j}}|\; j\in{\{1,\cdots,l\}}\}\) to \(\lambda_{i}\) repeatedly. We use a canonical basis of \(V_{\lambda_{i}}\) from section \(5A.1\) of the article \cite{Ja} with the following properties:
\begin{equation}
e_{-\alpha_{j}}(v_{u(\lambda_{i})})=
\begin{cases}\label{eq2.2}
v_{u(\lambda_{i})-\alpha_{j}} &(u(\lambda_{i}),\alpha_{j}^{\vee})=1\\
0 & \text{otherwise}.
\end{cases}
\end{equation} 
\begin{equation}
e_{\alpha_{j}}(v_{u(\lambda_{i})})=
\begin{cases}\label{eq2.3}
v_{u(\lambda_{i})+\alpha_{j}} &(u(\lambda_{i}),\alpha_{j}^{\vee})=-1\\
0 & \text{\rm otherwise}.
\end{cases}
\end{equation}
\[
H_{\alpha_{j}}(v_{u(\lambda_{i})})=(u(\lambda_{i}),\alpha_{j}^{\vee})v_{u(\lambda_{i})},
\]
for all weights \(u(\lambda_{i})\) and all \(j\in{\{1,\cdots,l\}}\). As a consequence of (\ref{eq2.3}), we have
\begin{equation}\label{eq2.4}
e_{\psi}(v_{u(\lambda_{i})})=
\begin{cases}
	v_{u(\lambda_{i})+\psi}&(u(\lambda_{i}),\psi^{\vee})=-1\\
	0 & \text{otherwise}.
\end{cases}
\end{equation}

\subsection{Results}
\hspace{1pt}\\
\indent For any minuscule weight \(\lambda_{i}\), the discussion in 2.3 establishes an isomorphism
\[V_{\lambda_{i}}= \bigoplus_{u(\lambda_{i})\in{W^{P}}\cdot \lambda_{i}}V_{u(\lambda_{i})}\cong H^{*}(G/P_{i};\C).
\]
We remark that from section 2.3 this isomorphism is given by 
\[V_{u(\lambda_{i})}\ni v_{u(\lambda_{i})}\leftrightarrow \sigma_{u}\in{H^{2\ell(u)}(G/P_{i});\C}
\]
for all \(u\in{W^{P_{i}}}\). From this it can be seen that the cohomology grading on the right corresponds to the grading by simple roots on the left.\\
\indent Now we can state our main theorem.
\begin{thm}\label{main}
Fix \(\g\) and a minuscule weight \(\lambda_{i}\). There is a natural correspondence between (i) the asymptotic data 
\[
m=-h_{0}=-\sum_{j=1}^{l}r_{j}H_{\alpha_{j}}\in{\mathfrak{h}_{\sharp}}
\]
and (ii) the DPW data 
\[\omega =\frac{1}{\lambda}\left(\sum_{j=1}^{l}e_{-\alpha_{j}}+qe_{\psi}\right)\frac{dq}{q}
\]
for solutions of the tt*-Toda equations. The asymptotic data corresponds to a unique global solution when \(\g\) has type \(A_{n}\) (and conjecturally for any \(\g\)). The holomorphic data correspond to the Dubrovin connection for the quantum cohomology of \(G/P_{i}\), i.e. the natural action of \(\sum_{j=1}^{l} e_{-\alpha_{j}}+qe_{\psi}\) corresponds to quantum multiplication by a generator of \(H^{2}(G/P_{i},\C)\).
\end{thm}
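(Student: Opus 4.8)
The plan is to establish the three assertions in turn, with essentially all the work in the last. The correspondence between the data (i) and (ii) is immediate from the dictionary recorded before Proposition~\ref{pro4}. Reading off the exponents of the target connection
$\omega=\tfrac1\lambda(\sum_{j=1}^l e_{-\alpha_j}+qe_\psi)\tfrac{dq}{q}=\tfrac1\lambda(\sum_j e_{-\alpha_j}\tfrac{dq}{q}+e_\psi\,dq)$
and comparing with $\tfrac1\lambda(\sum_j z^{k_j}e_{-\alpha_j}+z^{k_0}e_\psi)\,dz$ under $q=z$ forces $k_j=-1$ for $1\le j\le l$ and $k_0=0$. Since $\alpha_k(h_0)=1$ for every $k$ (as $h_0=\sum_j\epsilon_j$ and $\alpha_k(\epsilon_j)=\delta_{kj}$), the relation $\alpha_j(m)=s(k_j+1)-1$ gives $\alpha_j(m)=-1=\alpha_j(-h_0)$ for $1\le j\le l$, and because the $\alpha_j$ form a basis of $\mathfrak h^*$ this pins down $m=-h_0$. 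All exponents satisfy $k_j\ge-1$, so Proposition~\ref{pro4} produces a local solution and Theorem~\ref{thm5} places $m$ at the corresponding point of $\mathfrak A$; the uniqueness of the resulting global solution in type $A_n$ I would quote from \cite{GIL}, leaving the general case as the stated conjecture.

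The heart of the theorem is the identification of the action of $L:=\sum_{j=1}^l e_{-\alpha_j}+qe_\psi$ on $V_{\lambda_i}$ with quantum multiplication by the generator of $H^2(G/P_i)$. I would work throughout via the isomorphism $v_{u(\lambda_i)}\leftrightarrow\sigma_u$ of Section~2.5, under which the weight grading becomes the cohomological grading ($\ell(u)=0$ at the highest weight) and the generator of $H^2$ is $\sigma_{s_{\alpha_i}}$, with $s_{\alpha_i}$ the unique length-one element of $W^{P_i}$. The strategy is then to compare $L$ term by term with the quantum Chevalley formula for $\sigma_{s_{\alpha_i}}\star(-)$ from \cite{FW}. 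What makes such a clean comparison possible is that, $\lambda_i$ being minuscule, every weight $\mu=u(\lambda_i)$ satisfies $(\mu,\alpha^\vee)\in\{-1,0,1\}$ for all roots $\alpha$ and all multiplicities are one (Proposition~\ref{pro6}).

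For the classical part I would use the fact that the Bruhat order on $W^{P_i}$, transported to weights, is a minuscule poset whose covers are exactly the relations $\mu\gtrdot\mu-\alpha_j$ with $(\mu,\alpha_j^\vee)=1$; by \eqref{eq2.2} the operator $e_{-\alpha_j}$ sends $v_\mu\mapsto v_{\mu-\alpha_j}$ precisely along such a cover. Since the minuscule classical Chevalley formula reads $\sigma_{s_{\alpha_i}}\cup\sigma_u=\sum_{u\lessdot u'}\sigma_{u'}$ with all coefficients $1$, summing over $j$ identifies $\sum_j e_{-\alpha_j}$ with classical cup product by $\sigma_{s_{\alpha_i}}$. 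For the quantum part I would match $q\,e_\psi$ with the quantum correction: by \eqref{eq2.4}, $e_\psi$ sends $v_\mu\mapsto v_{\mu+\psi}$ when $(\mu,\psi^\vee)=-1$, raising the weight by the highest root and so lowering $\ell(u)$ by $\mathrm{ht}(\psi)=s-1$. Together with a single power of $q$, whose degree is $2s$ because the Fano index of a minuscule $G/P_i$ equals the Coxeter number $s$, this has total cohomological degree $2s-2(s-1)=+2$, as required of a degree-two operator. It then remains to check that the only quantum term in the divisor formula is the reflection in $\psi$, carrying coefficient $(\lambda_i,\psi^\vee)=1$ (a one-line computation in each type), which is exactly the coefficient of $e_\psi$ in $L$.

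I expect this quantum matching to be the main obstacle. The two delicate points are to show that the sole quantum contribution to $\sigma_{s_{\alpha_i}}\star\sigma_u$ is the $\psi$-reflection with $q$-exponent $1$ --- ruling out other roots and higher-degree curves --- and to verify that the combinatorial condition under which this term appears coincides exactly with the weight condition $(\mu,\psi^\vee)=-1$ governing $e_\psi$ in \eqref{eq2.4}. Both should follow uniformly from the minuscule inequality $(\lambda_i,\alpha^\vee)\le1$ and the extremality of $\psi$ among the positive roots, which is precisely the sense in which the present argument avoids the case-by-case analysis of \cite{GM}.
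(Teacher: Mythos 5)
Your strategy coincides with the paper's: the correspondence (i)\(\leftrightarrow\)(ii) by reading off the DPW exponents \(k_{1}=\cdots=k_{l}=-1\), \(k_{0}=0\) (your treatment of this part, and of the global-solution statement via \cite{GIL}, is correct and essentially identical to the paper's), followed by a term-by-term comparison of \(\sum_{j}e_{-\alpha_{j}}+qe_{\psi}\) with the quantum Chevalley formula of \cite{FW}. The gap is at the point you yourself flag as the main obstacle: you assert that the two delicate facts ``should follow uniformly from the minuscule inequality and the extremality of \(\psi\)'', but you supply no argument, and these facts are precisely what the paper's Section 3 is devoted to proving. The missing bridge is between weight combinatorics and Bruhat length in \(W^{P_{i}}\): Lemma \ref{key} (for simple \(\alpha\), \((u(\lambda_{i}),\alpha^{\vee})=1,0,-1\) iff \(\ell(s_{\alpha}u)=\ell(u)+1,\ell(u),\ell(u)-1\); the forward implications require a genuine argument about the parabolic quotient, not just minusculeness), Corollary \ref{cor16} (\(\ell(u)\) equals the height of \(\lambda_{i}-u(\lambda_{i})\)), and Proposition \ref{prop15}, which converts the length conditions \(\ell(u)+1\) and \(\ell(u)-(s-1)\) in the Chevalley sums into height conditions, forcing \(\alpha\) to be simple in the classical terms and forcing the quantum term to be the \(\psi\)-reflection (\(\psi\) being the unique positive root of height \(s-1\)), which appears exactly when \((u(\lambda_{i}),\psi^{\vee})=-1\). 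Your Fano-index degree count is only a consistency check: it confirms that the quantum term has cohomological degree \(2\), but it cannot establish when that term occurs, nor exclude contributions from other roots. Your appeal to minuscule-poset cover combinatorics for the classical part could be made rigorous by citation, but no analogous off-the-shelf statement is invoked for the quantum part, which is where the real work lies.

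There is also a bookkeeping slip that matters for a complete proof. The formula of \cite{FW} is indexed by right multiplications \(us_{\alpha}\), \(\alpha\in{\triangle^{+}\backslash\triangle^{+}_{P_{i}}}\), with coefficients \((\lambda_{i},\alpha^{\vee})\), whereas the operators \(e_{-\alpha_{j}}\) and \(e_{\psi}\) act on weights by left reflections, via (\ref{eq2.2}) and (\ref{eq2.4}). One must therefore rewrite \(us_{\alpha}=s_{\alpha'}u\) with \(\alpha'=\pm u(\alpha)\) and verify that \(\alpha'\) is positive --- the step where the hypothesis \(\alpha\notin{\triangle_{P_{i}}}\) is actually used. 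Once this is done, the quantum coefficient is \((\lambda_{i},\alpha^{\vee})=-(u(\lambda_{i}),\psi^{\vee})=1\), not \((\lambda_{i},\psi^{\vee})\) as you wrote; the two numbers happen to agree (both equal \(1\) for a minuscule weight), so your formula is numerically right, but your derivation does not track the actual index of summation, and this conversion is exactly the content of the final part of the paper's proof.
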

\begin{proof}
	In the bijection of Theorem \ref{thm5} (section 2.1), we see that \(m=-h_{0}\) corresponds to the origin of the fundamental Weyl alcove, and in this case we have \(k_{0}=0\) and \(k_{1}=\cdots=k_{l}=-1\). This gives the correspondence between (i) and (ii) (with \(q=z\)). For the statement concerning global solutions, we refer to \cite{GIL}, \cite{Mo}. The identification of \(\omega\) with the Dubrovin connection can be extracted from \cite{GM}, but we shall present a new\footnote{After finishing the first draft of this paper we found essentially the same proof is given in \cite{LT}.} and more direct proof in the next section.
\end{proof}
\begin{rem} (On the Satake isomorphism) \rm{When \(\g\) is of type \(A_{n}\) (or, conjecturally, of type \(D_{n}\),\(E_{6}\)), the same global solution corresponds to the Dubrovin connection of any minuscule weight. This suggests a relation between the quantum cohomology algebra of the corresponding flag manifolds. In the \(A_{n}\) case this can be stated as 
\[\textstyle\bigwedge^{k}QH^{*}(\C P^{n})\cong QH^{*}Gr(k,n+1)
\]
(see \cite{GM} for further explanation).\\
\indent In the \(D_{n}\) case, the analogous relation is: 
\begin{align}\label{rem14}
\textstyle\bigwedge_{\pm}^{half}QH^{*}(Q_{2n-2})\cong \text{End}_{\C}(QH^{*}(S_{\pm})).
\end{align}
This follows from theorem \ref{main} when we identify \(H^{*}(Q_{2n-2};\C)\) with \(\C^{2n}\) and \(H^{*}(S_{\pm};\C)\) with \(\Delta_{\pm}\), because (\ref{rem14}) corresponds to the well known relation
\[\textstyle\bigwedge_{\pm}^{half}\C^{2n}\cong \text{End}_{\C}(\Delta_{\pm}).
\]
In order to explain the notation, we recall the relation here. We denote a positively oriented orthonormal basis of \(\C^{2n}\) by \(e_{1},\cdots, e_{2n}\). We define the isomorphism \(\star:\textstyle\bigwedge^{i}\C^{2n}\rightarrow \textstyle\bigwedge^{2n-i}\C^{2n}\) by 
\[\star(e_{\xi(1)}\wedge e_{\xi(2)}\wedge \cdots\wedge e_{\xi(i)})=\text{sign}(\xi)e_{\xi(i+1)}\wedge e_{\xi(i+2)}\wedge \cdots \wedge e_{\xi(2n)}\]
for any permutation \(\xi\). Then we obtain \(\star\cdot \star =(-1)^{i(2n-i)}\text{id}\). We define \(\iota:=(-i)^{n}\star: \bigwedge^{n}\C^{2n}\rightarrow \bigwedge^{n}\C^{2n}\). Then \(\iota\cdot\iota=\text{id}\). Thus we have the canonical eigenspace decomposition \(\bigwedge^{n}\C^{2n}\cong \bigwedge_{+}^{n}\C^{2n}\oplus \bigwedge_{-}^{n}\C^{2n}\). If \(n=2m+1\), then we define \(\bigwedge_{\pm}^{half}\C^{2n}\) by
\[\textstyle\bigwedge^{0}\C^{4m+2}\bigoplus \textstyle\bigwedge^{2}\C^{4m+2}\bigoplus \cdots \bigoplus \textstyle\bigwedge^{2m}\C^{4m+2}.
\]
If \(n=2m\), then we define \(\bigwedge_{+}^{half}\C^{2n}\) by
\[\textstyle\bigwedge^{0}\C^{4m}\bigoplus \textstyle\bigwedge^{2}\C^{4m}\bigoplus \cdots \bigoplus \textstyle\bigwedge_{+}^{2m}\C^{4m}.
\]
and \(\bigwedge_{-}^{half}\C^{2n}\) by
\[\textstyle\bigwedge^{0}\C^{4m}\bigoplus \textstyle\bigwedge^{2}\C^{4m}\bigoplus \cdots \bigoplus \textstyle\bigwedge_{-}^{2m}\C^{4m}.
\]
\noindent From Theorem (6.2) of \cite{BD}, we have 
\begin{align*}
	& \Delta_{+}\otimes \Delta_{+}=\textstyle\bigwedge_{+}^{n}+\textstyle\bigwedge^{n-2}+\cdots \\
	& \Delta_{+}\otimes \Delta_{-}=\textstyle\bigwedge^{n-1}+\textstyle\bigwedge^{n-3}+\cdots \\
	& \Delta_{-}\otimes \Delta_{-}=\textstyle\bigwedge_{-}^{n}+\textstyle\bigwedge^{n-2}+\cdots 
\end{align*}
as \(\mathfrak{spin}(2n)\) representations where the last terms are \(\bigwedge^{4}+\bigwedge^{2}+\bigwedge^{0}\) or \(\bigwedge^{3}+\bigwedge^{1}\). If \(n=2m+1\), then we have 
\begin{align*}
\text{End}_{\C}(\Delta_{+})&\cong \Delta_{+}^{*}\otimes \Delta_{+}\cong \Delta_{+}\otimes \Delta_{-}\\
& \cong \textstyle\bigwedge^{2m}+\textstyle\bigwedge^{2m-2}+\cdots+\textstyle\bigwedge^{2}+\textstyle\bigwedge^{0}\\
& =\textstyle\bigwedge_{\pm}^{half}\C^{4m+2}. 
\end{align*}
If \(n=2m\), then we have 
\begin{align*}
\text{End}_{\C}(\Delta_{+})&\cong \Delta_{+}^{*}\otimes \Delta_{+}\cong \Delta_{+}\otimes \Delta_{+}\\
& \cong \textstyle \bigwedge_{+}^{2m}+\bigwedge^{2m-2}+\cdots+\bigwedge^{2}+\bigwedge^{0}\\
& =\textstyle\bigwedge_{+}^{half}\C^{4m}. 
\end{align*}
When we consider the minuscule \(\Delta_{-}\) and the corresponding homogeneous space \(S_{-}\), we obtain 
\[\textstyle\bigwedge_{-}^{half}QH^{*}(Q_{2n-2})\cong \text{End}_{\C}(QH^{*}(S_{-}))
\]
as in the case of \(\Delta_{+}\).
}
\end{rem}

\section{Completion of the proof of the main theorem}
\hspace{1pt}\\
\indent We consider the irreducible representations \(V_{\lambda_{i}}\) whose highest weight are minuscule weights \(\lambda_{i}\) (see table in section 2.3). In this section we use results on quantum cohomology to prove that the quantum multiplication by the generator of the second cohomology coincides with the endomorphism \(\sum_{j=1}^{l}e_{-\alpha_{j}}+qe_{\alpha_{\psi}}\) for a minuscule representation \(V_{\lambda_{i}}\). To show this statement, we use the quantum Chevalley formula.
\begin{thm}{(\cite{FW})} For \(\beta\in{\Pi\backslash\Pi_{P_{i}}}\) and \(u\in{W^{P_{i}}}\), we have the quantum product \(\circ\) by \(\sigma_{\beta}\) as
\begin{align*}
	\sigma_{s_{\beta}}\circ \sigma_{u}=&\sum_{\ell(us_{\alpha})=\ell(u)+1}(\lambda_{\beta},\alpha^{\vee})\sigma_{us_{\alpha}}\\
	&+\sum_{l(us_{\alpha})=l(u)-n_{\alpha}+1}(\lambda_{\beta},\alpha^{\vee})\sigma_{us_{\alpha}}\cdot q^{d(\alpha)}
\end{align*}
where \(\alpha\) ranges over \(\triangle^{+}\backslash \triangle_{P_{i}}^{+}\), \(\lambda_{\beta}\) is the fundamental weight corresponding to \(\beta\), 
\[
n_{\alpha}=(\sum_{\gamma\in{\triangle^{+}\backslash\triangle_{P_{i}}^{+}}} \gamma,\alpha^{\vee})
\]
and
\[
d(\alpha)=\sum_{\beta\in{\Pi\backslash\Pi_{P_{i}}}}(\lambda_{\beta},\alpha^{\vee})\sigma(s_{\beta}),
\]
and where \(\sigma(s_{\beta})\) is the homology class of \(H_{2}(G/P_{i})\) which is Poincar\'e dual to \(\sigma_{s_{\beta}}\).
\end{thm}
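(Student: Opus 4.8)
The plan is to read off the structure constants of the quantum product directly from their definition as three-point genus-zero Gromov--Witten invariants, and then to separate the degree-zero (classical) part from the positive-degree (quantum) corrections. Expanding $\sigma_{s_\beta}\circ\sigma_u$ in the Schubert basis $\{\sigma_w\}$, the coefficient of $q^D\sigma_w$ is the invariant $\langle\sigma_{s_\beta}\mid\sigma_u\mid\sigma_w^\vee\rangle_D$, where $\sigma_w^\vee$ denotes the element of the Poincar\'e-dual basis. Because $\sigma_{s_\beta}$ lies in $H^2(G/P_i)$, the divisor equation for genus-zero invariants applies: for $D\neq 0$ one has $\langle\sigma_{s_\beta}\mid\sigma_u\mid\sigma_w^\vee\rangle_D=\langle\sigma_{s_\beta},D\rangle\,\langle\sigma_u\mid\sigma_w^\vee\rangle_D$, the number of points in which a degree-$D$ rational curve meets the divisor times a two-point count, while for $D=0$ it is the classical triple intersection number. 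This already exhibits the two summands of the formula.

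Before identifying the surviving terms I would fix the cohomological grading, which forces both length conditions automatically. The quantum parameter $q^D$ carries cohomological degree $2\langle c_1(T_{G/P_i}),D\rangle$, and with $c_1(T_{G/P_i})=\sum_{\gamma\in\triangle^+\setminus\triangle^+_{P_i}}\gamma$ one computes $\langle c_1(T_{G/P_i}),d(\alpha)\rangle=(\sum_{\gamma\in\triangle^+\setminus\triangle^+_{P_i}}\gamma,\alpha^\vee)=n_\alpha$. Matching degrees on the two sides of $\sigma_{s_\beta}\circ\sigma_u$ then gives $\ell(us_\alpha)=\ell(u)+1$ for the classical terms and $\ell(us_\alpha)=\ell(u)-n_\alpha+1$ for the quantum terms, exactly as in the statement; so only the combinatorial identification of the terms and their coefficients remains.

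For the classical part I would prove the ordinary Chevalley--Monk formula $\sigma_{s_\beta}\cdot\sigma_u=\sum_{\ell(us_\alpha)=\ell(u)+1}(\lambda_\beta,\alpha^\vee)\sigma_{us_\alpha}$ (the sum over $\alpha\in\triangle^+\setminus\triangle^+_{P_i}$) by the Bernstein--Gelfand--Gelfand divided-difference calculus on $H^*(G/P_i)$: multiplication by the divisor $\sigma_{s_\beta}$ is dual to the length-raising part of the nil-Hecke action, and the weight $\lambda_\beta$ changes by $(\lambda_\beta,\alpha^\vee)\alpha$ under $s_\alpha$, which produces the coefficient. For the quantum corrections the divisor factor supplies $\langle\sigma_{s_\beta},d(\alpha)\rangle=(\lambda_\beta,\alpha^\vee)$ as soon as the contributing degrees are known, since $d(\alpha)=\sum_{\beta'}(\lambda_{\beta'},\alpha^\vee)\sigma(s_{\beta'})$ pairs with $\sigma_{s_\beta}$ to give $(\lambda_\beta,\alpha^\vee)$. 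Everything then reduces to the two-point invariants $\langle\sigma_u\mid\sigma_w^\vee\rangle_D$: I must show that they vanish unless $D=d(\alpha)$ for some $\alpha\in\triangle^+\setminus\triangle^+_{P_i}$ and equal $1$ in that case, with $w=us_\alpha$.

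This last step is the main obstacle and is the technical heart of \cite{FW}. It requires analyzing the moduli space $\overline{\mathcal M}_{0,2}(G/P_i,D)$ and the minimal-degree rational curves joining two opposite Schubert varieties: a dimension count bounds the effective classes that can contribute, after which one checks that the unique minimal class is $d(\alpha)$, realized by a single curve through two general points, so that the invariant equals $1$. In the minuscule setting relevant to this paper the analysis collapses to Weyl-group combinatorics: there $\triangle^+\setminus\triangle^+_{P_i}$ consists of roots with $(\lambda_i,\alpha^\vee)=1$ and the orbit $W^{P_i}\cdot\lambda_i$ is multiplicity-free by Proposition \ref{pro6}, so the contributing degrees and their coefficients can be read off directly from the action of the reflections $s_\alpha$ on the orbit, bypassing the general moduli computation.
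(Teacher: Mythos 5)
You should first note that the paper contains no proof of this statement at all: it is imported verbatim from Fulton--Woodward, as the citation \(\text{(\cite{FW})}\) in the theorem header indicates, and the paper's own original work (Section 3) begins \emph{downstream} of it, using the formula as a black box to identify \(\sigma_{s_{\alpha_i}}\circ\) with \(\sum_{j}e_{-\alpha_j}+qe_{\psi}\). So there is no internal proof to compare against, and your attempt must stand on its own as a proof of the quantum Chevalley formula. The reductions you perform are correct and standard: expanding in the Schubert basis, applying the divisor axiom to peel off \(\langle\sigma_{s_\beta},D\rangle\) for \(D\neq 0\), computing \(\langle c_1,d(\alpha)\rangle=n_\alpha\), and handling the classical part by BGG is exactly the right frame, and the coefficient \(\langle\sigma_{s_\beta},d(\alpha)\rangle=(\lambda_\beta,\alpha^\vee)\) follows as you say. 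One caveat even here: degree matching only pins down \(\ell(w)\) for the coefficient of \(q^D\sigma_w\); it does not by itself force \(w=us_\alpha\) or \(D=d(\alpha)\), so the length conditions are not ``automatic'' --- that identification is precisely the content of the hard step you then confront.

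The genuine gap is your final paragraph. You correctly isolate the heart of the matter --- that the two-point invariants \(\langle\sigma_u\mid\sigma_w^\vee\rangle_D\) vanish unless \(D=d(\alpha)\), \(w=us_\alpha\) with \(\ell(us_\alpha)=\ell(u)-n_\alpha+1\), and equal \(1\) in that case --- but your claim that in the minuscule setting this ``collapses to Weyl-group combinatorics'' via Proposition \ref{pro6}, ``bypassing the general moduli computation,'' does not hold. Proposition \ref{pro6} is a statement about weight multiplicities in the representation \(V_{\lambda_i}\); it carries no information about rational curves in \(G/P_i\), and an enumerative equality (a count of stable maps equal to \(1\), and vanishing for all other effective degrees) cannot be extracted from the action of reflections on the weight orbit. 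Indeed, the paper itself uses exactly this kind of orbit combinatorics (Lemma \ref{key}, Corollary \ref{cor16}, Proposition \ref{prop15}) --- but only \emph{after} assuming the quantum Chevalley formula, to rewrite its summation conditions; the combinatorics there is a consumer of the geometric input, not a substitute for it. To close the gap you would need genuine geometry: either the Fulton--Woodward analysis of \(\overline{\mathcal M}_{0,2}(G/P_i,D)\) and minimal degrees, or a minuscule-specific argument such as the quantum-to-classical principle used in \cite{CMP} (reducing the two-point count of lines to a classical intersection on an auxiliary homogeneous space), or an associativity/WDVV argument against a known presentation. As written, deferring this step to \cite{FW} makes your argument a correct reduction, not a proof.
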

In our situation, \(\Pi\backslash\Pi_{P_{i}}=\{\alpha_{i}\}\). Therefore the generator of the second cohomology is only \(\sigma_{s_{\alpha_{i}}}\) and \(\lambda_{\beta}=\lambda_{i}\). We have \(d(\alpha)=(\lambda_{i},\alpha^{\vee})\sigma(s_{\alpha_{i}})=\sigma(s_{\alpha_{i}})\) for \(\alpha\in{\triangle^{+}\backslash\triangle^{+}_{P_{i}}}\) because \(\lambda_{i}\) is a minuscule weight. We consider \(q^{\sigma(s_{\beta})}\)  only as a complex parameter \(q\) in \(\C\).\\ 
\indent From lemma 3.5 in \cite{FW}, the first Chern class of \(G/P_{i}\) is \(n_{\alpha}\) times a generator of \(H^{2}(G/P_{i})\), and by \cite{CMP}, we know that \(n_{\alpha}\) is the Coxeter number \(s\). Explicitly, we have \(n_{\alpha}=n+1\) (\(A_{n}\) type), \(n_{\alpha}=2n\) (\(B_{n}\) type), \(n_{\alpha}=2n\) (\(C_{n}\) type), \(n_{\alpha}=2n-2\) (\(D_{n}\) type), \(n_{\alpha}=12\) (\(E_{6}\) type), \(n_{\alpha}=18\) (\(E_{7}\) type) for all \(\alpha\in{\triangle^{+}\backslash \triangle_{P_{i}}^{+}}\).\\
Then we have the quantum Chevalley formula as follows.
\begin{align*}
	\sigma_{s_{\alpha_{i}}}\circ \sigma_{u}=&\sum_{\ell(us_{\alpha})=\ell(u)+1}(\lambda_{i},\alpha^{\vee})\sigma_{us_{\alpha}}\\
	&+\sum_{\ell(us_{\alpha})=\ell(u)-(s-1)}(\lambda_{i},\alpha^{\vee})\sigma_{us_{\alpha}}\cdot q
\end{align*}
where \(\alpha\in{\triangle^{+}\backslash\triangle_{P_{i}}^{+}}\).\\
\indent To replace the conditions of these summations, the following lemma, corollary and proposition are key ingredients.
\begin{lem}\label{key} Let \(\lambda_{i}\) be a minuscule weight. For \(u\in{W^{P_{i}}}\) and \(\alpha\in{\Pi}\), we have the three following situations.\\
	(I) \((u(\lambda_{i}),\alpha^{\vee})=1\Leftrightarrow \ell(s_{\alpha}u)=\ell(u)+1\).\\
	(II) \((u(\lambda_{i}),\alpha^{\vee})=0\Leftrightarrow \ell(s_{\alpha}u)=\ell(u)\).\\
	(III) \((u(\lambda_{i}),\alpha^{\vee})=-1\Leftrightarrow \ell(s_{\alpha}u)=\ell(u)-1\).\\
	Here we consider the length function \(l(u)\) in \(W^{P_{i}}\).
\end{lem}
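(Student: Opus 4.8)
The plan is to prove the single clean identity $\ell(s_\alpha u)-\ell(u)=(u(\lambda_i),\alpha^\vee)$ for all $u\in W^{P_i}$ and all $\alpha\in\Pi$; the three cases (I), (II), (III) are then just its three possible values. The first input is that, since $\lambda_i$ is minuscule, $(u(\lambda_i),\alpha^\vee)\in\{-1,0,1\}$ for every weight $u(\lambda_i)$ of $V_{\lambda_i}$ and every root (the $\alpha$-string through any weight has length at most two), so the three situations are exhaustive and mutually exclusive. The second input, used repeatedly, is the identification of $\Pi_{P_i}$ with $\{\beta\in\Pi : (\lambda_i,\beta^\vee)=0\}$ together with the bijection $W^{P_i}\to W\cdot\lambda_i$, $u\mapsto u(\lambda_i)$, coming from the fact that the stabiliser of the fundamental weight $\lambda_i$ is exactly $W_{P_i}$ (Proposition \ref{pro9}); in particular the minuscule weight $u(\lambda_i)$ determines the minimal coset representative $u$ uniquely.

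The technical heart is the following sublemma, which I would prove by hand: if $v\in W^{P_i}$, $\alpha\in\Pi$ and $(v(\lambda_i),\alpha^\vee)=1$, then $s_\alpha v\in W^{P_i}$ and $\ell(s_\alpha v)=\ell(v)+1$. For the first assertion I use the characterisation $W^{P_i}=\{x: x(\beta)\in\triangle^+\ \forall\beta\in\Pi_{P_i}\}$: since $s_\alpha$ permutes $\triangle^+\setminus\{\alpha\}$ and only sends $\alpha$ to $-\alpha$, one has $s_\alpha v(\beta)>0$ unless $v(\beta)=\alpha$; but $v(\beta)=\alpha$ would give $(v(\lambda_i),\alpha^\vee)=(\lambda_i,\beta^\vee)=0$, contradicting the hypothesis. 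For the second assertion I note that $W$-invariance of the form gives $(v(\lambda_i),\alpha^\vee)=(\lambda_i,(v^{-1}\alpha)^\vee)=1>0$, and dominance of $\lambda_i$ then forces the root $v^{-1}\alpha$ to be positive; by the standard fact $\ell(s_\alpha v)>\ell(v)\Leftrightarrow v^{-1}\alpha\in\triangle^+$ this yields $\ell(s_\alpha v)=\ell(v)+1$ in $W$, hence the same for the minimal representatives. This sublemma is exactly case (I) in the forward direction.

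Cases (II) and (III) I would then reduce to the sublemma via the weight bookkeeping. In case (II), $(u(\lambda_i),\alpha^\vee)=0$ gives $s_\alpha u(\lambda_i)=u(\lambda_i)$, so $s_\alpha u$ and $u$ lie in the same coset and $u$ is its minimal representative, whence $\ell(s_\alpha u)=\ell(u)$. In case (III), $(u(\lambda_i),\alpha^\vee)=-1$ gives $s_\alpha u(\lambda_i)=u(\lambda_i)+\alpha$, and if $u'\in W^{P_i}$ denotes the minimal representative with $u'(\lambda_i)=s_\alpha u(\lambda_i)$ then $(u'(\lambda_i),\alpha^\vee)=1$; applying the sublemma to $u'$ gives $s_\alpha u'\in W^{P_i}$ with $s_\alpha u'(\lambda_i)=u(\lambda_i)$, so $s_\alpha u'=u$ and $\ell(u)=\ell(u')+1$, i.e. $\ell(s_\alpha u)=\ell(u)-1$. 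Putting the three together shows $\ell(s_\alpha u)-\ell(u)=(u(\lambda_i),\alpha^\vee)$ in all cases, and since the left-hand side of each stated equivalence runs over the three mutually exclusive values $\{1,0,-1\}$, every forward implication is automatically an equivalence. I expect the main obstacle to be precisely this bookkeeping between $W$ and $W^{P_i}$: the content is not the length computation in $W$ (which is immediate) but the verification that the relevant elements remain minimal coset representatives, which is what makes the sublemma's first assertion the crux.
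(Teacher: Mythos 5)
Your proposal is correct, and it follows the same overall skeleton as the paper's proof (establish the forward implications, then get the converses for free from the trichotomy $(u(\lambda_i),\alpha^\vee)\in\{1,0,-1\}$ forced by the minuscule hypothesis). But the internal organization is genuinely different. The paper proves all three forward implications by separate direct arguments: in each case it determines the nature of $u^{-1}(\alpha)$ (a positive root, a root in $\triangle_{P_i}$, or a negative root), deduces the length change in $W$, and then verifies $s_\alpha u\in W^{P_i}$ by explicit length inequalities of the form $\ell(s_\alpha u s_\beta)>\ell(s_\alpha u)$ for $\beta\in\Pi_{P_i}$ --- in case (III) via a parity argument on lengths. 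You instead concentrate all the work into one sublemma (the forward direction of (I)), proving membership in $W^{P_i}$ by the root-theoretic characterization $W^{P_i}=\{x:\ x(\beta)\in\triangle^+\ \forall\beta\in\Pi_{P_i}\}$ together with the fact that $s_\alpha$ permutes $\triangle^+\setminus\{\alpha\}$, and then you derive (II) and (III) from the bijection $W^{P_i}\leftrightarrow W\cdot\lambda_i$: (II) because $s_\alpha u$ and $u$ lie in the same coset, and (III) by applying the sublemma to the partner element $u'$ with $u'(\lambda_i)=s_\alpha u(\lambda_i)$ and invoking uniqueness of minimal coset representatives (note that your conclusion $s_\alpha u'=u$ even recovers the paper's sharper statement that $s_\alpha u$ itself lies in $W^{P_i}$ in case (III)). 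Your route buys economy --- one nontrivial verification instead of three, with the orbit-coset dictionary doing the remaining bookkeeping --- while the paper's route is more self-contained in its length computations and makes explicit that each forward implication holds without the minuscule hypothesis (a fact your sublemma shares, since it only uses $(\lambda_i,\beta^\vee)=0$ for $\beta\in\Pi_{P_i}$, but your presentation leans on the orbit bijection, which the paper only invokes implicitly). Both arguments rest on the same two standard facts: $\ell(s_\alpha w)>\ell(w)\Leftrightarrow w^{-1}(\alpha)\in\triangle^+$ for simple $\alpha$, and the identification of the stabilizer of $\lambda_i$ with $W_{P_i}$.
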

\begin{proof}
(a) First we show the implication (\(\Rightarrow\)), for each of (I), (II), (III). Here we do not use the minuscule condition.\\
(I) We assume \((u(\lambda_{i}),\alpha^{\vee})=1\). We show \(s_{\alpha}u\in{W^{P_{i}}}\). If \((u(\lambda_{i}),\alpha^{\vee})=1\), \((\lambda_{i},u^{-1}(\alpha)^{\vee})=1\) and \(u^{-1}(\alpha)\) is a positive root. Therefore \(\ell(s_{\alpha}u)=\ell(u)+1\) in \(W\) (see section 1.6 in \cite{Hum}). For \(\beta\in{\Pi_{P_{i}}}\), we have 
\begin{align*}
	(\lambda_{i},(us_{\beta})^{-1}(\alpha)^{\vee})>0 &\Leftrightarrow (\lambda_{i},s_{\beta}u^{-1}(\alpha)^{\vee})>0\\
	&\Leftrightarrow (s_{\beta}(\lambda_{i}),u^{-1}(\alpha)^{\vee})>0\\
	&\Leftrightarrow (\lambda_{i},u^{-1}(\alpha)^{\vee})>0
\end{align*}
Therefore we have \((\lambda_{i},(us_{\beta})^{-1}(\alpha)^{\vee})>0\). Hence \(\ell(s_{\alpha}us_{\beta})>\ell(us_{\beta})\). On the other hand, for all \(\beta\in{\Pi_{P_{i}}}\), we have \(\ell(us_{\beta})>\ell(u)\) because \(u\) is in \(W^{P_{i}}\). Hence \(\ell(us_{\beta})=\ell(u)+1\) in \(W\). Thus we have
\[\ell(s_{\alpha}u)=\ell(us_{\beta})<\ell(s_{\alpha}us_{\beta}).
\]
This means that \(s_{\alpha}u\in{W^{P_{i}}}\). Therefore we obtain \(\ell(s_{\alpha}u)=\ell(u)+1\) in \(W^{P_{i}}\).\\
(II) We assume \((u(\lambda_{i}),\alpha^{\vee})=0\). We show \(s_{u^{-1}(\alpha)}\in{W_{P_{i}}}\). Let \(u^{-1}(\alpha)^{\vee}=b_{1}\alpha_{1}^{\vee}+\cdots+b_{l}\alpha_{l}^{\vee}\) (\(b_{i}\in{\R}\)). Then we have
\[(\lambda_{i},u^{-1}(\alpha)^{\vee})=b_{i}=0
\]
Therefore \(u^{-1}(\alpha)\in{\triangle_{P_{i}}}\) and \(s_{u^{-1}(\alpha)}\in{W_{P_{i}}}\). We obtain
\[\ell(s_{\alpha}u)=\ell(us_{u^{-1}(\alpha)})=\ell(u) \;\text{in}\; W^{P_{i}}.
\]
(III) We assume \((u(\lambda_{i}),\alpha^{\vee})=-1\). We show \(s_{\alpha}u\in{W^{P_{i}}}\). If \((u(\lambda_{i}),\alpha^{\vee})=-1\), then \((\lambda_{i},u^{-1}(\alpha)^{\vee})=-1<0\). \(u^{-1}(\alpha)\) is a negative root. Hence we have
\[
\ell(s_{\alpha}u)=\ell(u)-1<\ell(u)<\ell(us_{\beta}) \;\text{in}\; W
\] 
for \(\beta\in{\Pi_{P_{i}}}\). Now we have 
\[
\ell(us_{\beta})=\ell(u)+1=\ell(s_{\alpha}u)+2 \;\text{in}\; W.
\]
Let \(\ell(us_{\beta})=r\). Then \(\ell(s_{\alpha}us_{\beta})=r-1,r+1\) and \(\ell(s_{\alpha}u)=r-2\). So \(\ell(s_{\alpha}us_{\beta})>\ell(s_{\alpha}u)\). This means that \(s_{\alpha}u\in{W^{P_{i}}}\). Thus we obtain \(\ell(s_{\alpha}u)=\ell(u)-1\) in \(W^{P_{i}}\).\\
(b) Next we show the implication (\(\Leftarrow\)), for each of (I), (II), (III). For (I), we assume \(\ell(s_{\alpha u})=\ell(u)+1\). Since \(\lambda_{i}\) is minuscule, \((u(\lambda_{i}),\alpha^{\vee})\) takes only the values \(1,0,-1\). If \((u(\lambda_{i}),\alpha^{\vee})\) is \(0\) or \(-1\), we obtain a contradiction, by part (a). The proofs in the case (II), (III) are similar.  
\end{proof}
\indent Now we have the weights of \(V_{\lambda_{i}}\) as \(\lambda_{i}-\sum_{j=1}^{l}n_{j}\alpha_{j}\) where \(n_{j}\in{\Z_{\geq 0}}\). From this lemma, we obtain the following corollary.
\begin{cor}\label{cor16}
	 For \(u\in{W^{P_{i}}}\) such that \(u(\lambda_{i})=\lambda_{i}-\sum_{j=1}^{l}n_{j}\alpha_{j}\), we have \(\ell(u)=\sum_{j=1}^{l}n_{j}\).
\end{cor}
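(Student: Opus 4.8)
The plan is to argue by induction on the length $\ell(u)$ in $W^{P_i}$, using Lemma \ref{key} to convert each unit decrease of the length into a unit decrease of the coefficient sum $\sum_j n_j$. The point is that part (III) of Lemma \ref{key} links a length-lowering simple reflection to a weight pairing equal to $-1$, and that applying such a reflection to $u(\lambda_i)$ shifts the weight by exactly one simple root; so length and coefficient sum descend in lockstep.

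First I would treat the base case $\ell(u)=0$: here $u=e$, since $W_{P_i}$ is precisely the stabilizer of $\lambda_i$ and meets $W^{P_i}$ only in the identity, so $u(\lambda_i)=\lambda_i$, all $n_j=0$, and both sides vanish. For the inductive step, suppose $\ell(u)=k>0$. Then $u\neq e$, so $u(\lambda_i)\neq\lambda_i$; as $\lambda_i$ is the unique dominant weight in its Weyl orbit, $u(\lambda_i)$ cannot be dominant, so some simple root $\alpha=\alpha_{j_0}$ satisfies $(u(\lambda_i),\alpha^{\vee})<0$, and minusculity forces $(u(\lambda_i),\alpha^{\vee})=-1$. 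By Lemma \ref{key}(III) this gives $s_{\alpha}u\in{W^{P_i}}$ with $\ell(s_{\alpha}u)=\ell(u)-1$. The reflection formula then yields
\[
(s_{\alpha}u)(\lambda_i)=u(\lambda_i)-(u(\lambda_i),\alpha^{\vee})\alpha=u(\lambda_i)+\alpha_{j_0},
\]
so if $(s_{\alpha}u)(\lambda_i)=\lambda_i-\sum_j n'_j\alpha_j$ then $n'_{j_0}=n_{j_0}-1$ and $n'_j=n_j$ for $j\neq j_0$, whence $\sum_j n'_j=\sum_j n_j-1$ (the coefficient $n_{j_0}\geq 1$ holds automatically, since $(s_{\alpha}u)(\lambda_i)$ is again a weight of $V_{\lambda_i}$ and so has nonnegative coefficients). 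Applying the induction hypothesis to $s_{\alpha}u$ gives $\ell(s_{\alpha}u)=\sum_j n'_j$, and combining this with $\ell(u)=\ell(s_{\alpha}u)+1$ yields $\ell(u)=\sum_j n_j$, closing the induction.

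The step I expect to be the crux is the existence, whenever $\ell(u)>0$, of a simple root on which $u(\lambda_i)$ pairs to exactly $-1$. This is where both hypotheses on $\lambda_i$ enter: dominance guarantees that any orbit point other than $\lambda_i$ itself is non-dominant, hence has a strictly negative pairing with some simple coroot, while minusculity pins that pairing down to the single value $-1$ (it is the minuscule condition that makes the trichotomy of Lemma \ref{key} exhaustive). Once this is in hand, the remainder is just the bookkeeping of tracking that exactly one simple root is added to the weight at each length-lowering step.
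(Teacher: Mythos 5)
Your proof is correct and takes essentially the same route as the paper: both rest on Lemma \ref{key} to identify unit changes of length in \(W^{P_{i}}\) with shifts of \(u(\lambda_{i})\) by a single simple root. The only difference is one of packaging --- you run a downward induction via part (III) (existence of a descent, forced to pair to \(-1\) by minusculity), whereas the paper argues upward from the identity via part (I) in a two-line sketch; your version supplies the details (the base case via the stabilizer \(W_{P_{i}}\), and the nonnegativity of the coefficients) that the paper leaves implicit.
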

\begin{proof}
We have 
\begin{align*}
	\ell(s_{\alpha_{j}}u)=\ell(u)+1 &\Leftrightarrow (u(\lambda_{i}),\alpha_{j}^{\vee})=1\\
	&\Leftrightarrow s_{\alpha_{j}}(u(\lambda_{i}))=u(\lambda_{i})-\alpha_{j}
\end{align*}
by lemma \ref{key}. The elements of \(W^{P_{i}}\) are described by a product of simple reflections. Thus \(\ell(u)=\sum_{j=1}^{l}n_{j}\).
\end{proof}
We have the following proposition.
\begin{pro}\label{prop15}
(I) If there exist \(\alpha\in{\triangle^{+}}\) such that \(\ell(s_{\alpha}u)=\ell(u)+1\) for \(u\in{W^{P_{i}}}\), then \(\alpha\in{\Pi}\) and \((u(\lambda_{i}),\alpha^{\vee})=1\).\\
(II) If there exist \(\alpha\in{\triangle^{+}}\) such that \(\ell(s_{\alpha}u)=\ell(u)-(s-1)\) for \(u\in{W^{P_{i}}}\), then \(\alpha=\psi\) and \((u(\lambda_{i}),\psi^{\vee})=-1\).
\end{pro}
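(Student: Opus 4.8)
My plan is to translate both statements into assertions about heights of weights of \(V_{\lambda_i}\) via Corollary \ref{cor16}, after which everything collapses to a single length formula. Fix \(u\in W^{P_i}\) and set \(\mu:=u(\lambda_i)\); by Corollary \ref{cor16} we have \(\ell(u)=\mathrm{ht}(\lambda_i-\mu)\), where \(\mathrm{ht}\) denotes the sum of the simple-root coefficients. For any \(\alpha\in\triangle^{+}\), Proposition \ref{pro6} guarantees that the weights of \(V_{\lambda_i}\) are exactly the single orbit \(W\cdot\lambda_i\), so \(s_\alpha u(\lambda_i)=s_\alpha(\mu)=\mu-(\mu,\alpha^\vee)\alpha\) is again a weight; its \(W^{P_i}\)-representative \(u'\) is the unique one with \(u'(\lambda_i)=s_\alpha(\mu)\), and the length of \(s_\alpha u\) in \(W^{P_i}\) is \(\ell(u')=\mathrm{ht}(\lambda_i-s_\alpha(\mu))\), again by Corollary \ref{cor16}. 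Since \(\lambda_i-s_\alpha(\mu)=(\lambda_i-\mu)+(\mu,\alpha^\vee)\alpha\) and \(\mathrm{ht}\) is additive, I obtain the master identity
\[
\ell(s_\alpha u)-\ell(u)=(\mu,\alpha^\vee)\,\mathrm{ht}(\alpha).
\]

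The second ingredient is the constraint on \((\mu,\alpha^\vee)\) coming from minusculity. Writing \(\mu=w(\lambda_i)\) and \(w^{-1}(\alpha)=\pm\gamma\) with \(\gamma\in\triangle^{+}\), I would note that \((\mu,\alpha^\vee)=\pm(\lambda_i,\gamma^\vee)\), and \((\lambda_i,\gamma^\vee)\in\{0,1\}\) because \(\lambda_i\) is dominant and minuscule; hence
\[
(\mu,\alpha^\vee)\in\{-1,0,1\}\qquad\text{for every }\alpha\in\triangle^{+}.
\]
Together with the master identity and the fact that \(\mathrm{ht}(\alpha)\geq 1\) is a positive integer, this shows that the jump \(\ell(s_\alpha u)-\ell(u)\) is one of \(-\mathrm{ht}(\alpha),\,0,\,+\mathrm{ht}(\alpha)\).

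Both parts are then immediate. For (I): the hypothesis \(\ell(s_\alpha u)-\ell(u)=+1\) forces \((\mu,\alpha^\vee)\,\mathrm{ht}(\alpha)=1\), which with \((\mu,\alpha^\vee)\in\{-1,0,1\}\) and \(\mathrm{ht}(\alpha)\geq 1\) is possible only if \((\mu,\alpha^\vee)=1\) and \(\mathrm{ht}(\alpha)=1\); a positive root of height one is simple, so \(\alpha\in\Pi\). For (II): the hypothesis \(\ell(s_\alpha u)-\ell(u)=-(s-1)\) with \(s\geq 2\) makes the product strictly negative, forcing \((\mu,\alpha^\vee)=-1\) and \(\mathrm{ht}(\alpha)=s-1\). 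Since \(s-1=\sum_{j=1}^{l}q_j=\mathrm{ht}(\psi)\) is the maximal height in the irreducible root system \(\triangle\), and the highest root \(\psi\) is its unique realization, I conclude \(\alpha=\psi\) and \((\mu,\psi^\vee)=-1\).

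I do not expect a genuine obstacle: once Corollary \ref{cor16} converts \(W^{P_i}\)-length into weight height, the proof is pure bookkeeping through the master identity. The two points to state with care are (a) the bound \((\mu,\alpha^\vee)\in\{-1,0,1\}\) for all positive \(\alpha\), which is precisely where minusculity is used and must be invoked for the non-simple \(\alpha\) too; and (b) the uniqueness of \(\psi\) among positive roots of height \(s-1\), which rests on the fact that \(\psi\) dominates every positive root in the root order (so \(\psi-\alpha\geq 0\) of height \(0\) forces \(\alpha=\psi\)). The borderline value \((\mu,\alpha^\vee)=0\) is harmless: there \(s_\alpha(\mu)=\mu\), so \(s_\alpha u\) and \(u\) determine the same coset of \(W_{P_i}\) (the stabilizer of a minuscule weight), giving equal lengths in accordance with the master identity.
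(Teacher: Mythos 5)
Your proof is correct and follows essentially the same route as the paper's: both use the reflection formula \(s_{\alpha}(\mu)=\mu-(\mu,\alpha^{\vee})\alpha\) together with Corollary \ref{cor16} to convert \(W^{P_i}\)-lengths into heights, invoke minusculity to force \((u(\lambda_i),\alpha^{\vee})\in\{-1,0,1\}\), and finish by noting that a positive root of height \(1\) is simple and that \(\psi\) is the unique positive root of height \(s-1\). Your ``master identity'' \(\ell(s_{\alpha}u)-\ell(u)=(\mu,\alpha^{\vee})\,\mathrm{ht}(\alpha)\) is simply an explicit packaging of what the paper leaves implicit, so your writeup is a more detailed rendering of the same argument rather than a different one.
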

\begin{proof}
	(I) For \(\alpha\in{\triangle^{+}}\) such that \(\ell(s_{\alpha}u)=\ell(u)+1\), we have 
\[s_{\alpha}u(\lambda_{i})=u(\lambda_{i})-(u(\lambda_{i}),\alpha^{\vee})\alpha.
\]
By the assumption that \(\ell(s_{\alpha}u)>\ell(u)\), we have \((u(\lambda_{i}),\alpha^{\vee})=1\) and \(\alpha\) must be a simple root by corollary \ref{cor16}.\\
(II) For \(\alpha\in{\triangle^{+}}\) such that \(\ell(s_{\alpha}u)=\ell(u)-(s-1)\). Then we have 
\[s_{\alpha}u(\lambda_{i})=u(\lambda_{i})-(u(\lambda_{i}),\alpha^{\vee})\alpha.
\]
By the assumption \(\ell(s_{\alpha}u)<\ell(u)\), we have \((u(\lambda_{i}),\alpha^{\vee})=-1\). When \(\alpha=\sum_{j=1}^{l}q_{j}\alpha_{j}\), then \(\alpha\) must be \(\psi\) because there is only one positive root which has the height \(\sum_{j=1}^{l}q_{j}=s-1\).
\end{proof}

By using the relation \(us_{\alpha}=s_{u(\alpha)}u=s_{-u(\alpha)}u\),  corollary \ref{cor16} and proposition \ref{prop15}, we can replace the conditions of the summation in the quantum Chevalley formula.\\
\indent We show that we can simplify the first summation to 
\[\sum_{(u(\lambda_{i}),\alpha'^{\vee})=1,\alpha'\in{\Pi}}\sigma_{s_{\alpha'}u}
\] 
by setting \(\alpha'=u(\alpha)\). Then we shall show that \(\alpha'\) is a positive root. In fact, if \(\alpha'\) is a negative root, then \((u(\lambda_{i}),\alpha'^{\vee})=-1\) satisfies \(\ell(s_{\alpha'}u)=\ell(u)+1\). However this contradicts \(\alpha\in{\triangle^{+}\backslash\triangle_{P_{i}}^{+}}\) because we have 
\[(u(\lambda_{i}),\alpha'^{\vee})=-1\Leftrightarrow (u(\lambda_{i}),u(\alpha^{\vee}))=-1\Leftrightarrow (\lambda_{i},\alpha^{\vee})=-1.
\]
Thus \(\alpha'\) is in \(\triangle^{+}\). By proposition \ref{prop15}, we have \(\alpha'\in{\Pi}\subset{\triangle^{+}}\). Hence we have 
\[\sum_{\ell(us_{\alpha})=\ell(u)+1}(\lambda_{i},\alpha^{\vee})\sigma_{us_{\alpha}}=\sum_{(u(\lambda_{i}),\alpha'^{\vee})=1,\alpha'\in{\Pi}}\sigma_{s_{\alpha'}u}
\]
as the first summation of \(\sigma_{s_{\alpha_{i}}}\circ \sigma_{u}\).\\
\indent For the second summation, let \(\alpha'=-u(\alpha)\). Then we shall show that \(\alpha'\) is also a positive root. In fact, if \(\alpha'\) is a negative root, then \((u(\lambda_{i}),\alpha'^{\vee})=1\) satisfies \(\ell(s_{\alpha'}u)=\ell(u)-(s-1)\). However this contradicts \(\alpha\in{\triangle^{+}\backslash\triangle_{P_{i}}^{+}}\) because we have 
\[
(u(\lambda_{i}),\alpha'^{\vee})=1\Leftrightarrow (u(\lambda_{i}),-u(\alpha^{\vee}))=1\Leftrightarrow (\lambda_{i},\alpha^{\vee})=-1.
\]
Thus \(\alpha'=-u(\alpha)\) is in \(\triangle^{+}\) for \(\alpha\in{\triangle^{+}\backslash\triangle_{P_{i}}^{+}}\). By proposition \ref{prop15}, we have \(\alpha'=\psi\) and \((u(\lambda_{i}),\psi^{\vee})=-1\). Hence for the second summation of \(\sigma_{s_{\alpha_{i}}}\circ \sigma_{u}\) we have
\begin{align*}
&\sum_{\ell(us_{\alpha})=\ell(u)-(s-1)}(\lambda_{i},\alpha^{\vee})\sigma_{us_{\alpha}}\cdot q\\
=& \sum_{\ell(s_{\alpha'}u)=\ell(u)-(s-1)}(\lambda_{i},-u^{-1}(\alpha'^{\vee}))\sigma_{s_{\alpha'}u}\cdot q\\
=&
\begin{cases}
q\sigma_{s_{\psi}u}& (u(\lambda_{i}),\psi^{\vee})=-1\\
0 & \text{\rm otherwise}.
\end{cases}
\end{align*}
Thus we obtain
\begin{align*}
\sigma_{s_{\alpha_{i}}}\circ \sigma_{u}=
\begin{cases}
\displaystyle\sum_{(u(\lambda_{i}),\alpha_{j}^{\vee})=1}\sigma_{s_{\alpha_{j}}u}+q \sigma_{s_{\psi}u} & (u(\lambda_{i}),\psi^{\vee})=-1\\
\displaystyle\sum_{(u(\lambda_{i}),\alpha_{j}^{\vee})=1}\sigma_{s_{\alpha_{j}}u} & \text{\rm otherwise}.
\end{cases}
\end{align*}
On the other hand, for \(v_{u(\lambda_{i})}\) we have 
\begin{align*}
&(\sum_{j=1}^{l}e_{-\alpha_{j}}+qe_{\psi})\cdot v_{u(\lambda_{i})}\\
=&
\begin{cases}	
\displaystyle\sum_{(u(\lambda_{i}),\alpha_{j}^{\vee})=1} v_{u(\lambda_{i})-\alpha_{j}}+qv_{u(\lambda_{i})+\psi} & (u(\lambda_{i}),\psi^{\vee})=-1\\
\displaystyle\sum_{(u(\lambda_{i}),\alpha_{j}^{\vee})=1} v_{u(\lambda_{i})-\alpha_{j}} & \text{\rm otherwise}
\end{cases}
\\
=&
\begin{cases}
\displaystyle\sum_{(u(\lambda_{i}),\alpha_{j}^{\vee})=1} v_{s_{\alpha_{j}}u(\lambda_{i})}+qv_{s_{\psi}u(\lambda_{i})} & (u(\lambda_{i}),\psi^{\vee})=-1\\
\displaystyle\sum_{(u(\lambda_{i}),\alpha_{j}^{\vee})=1} v_{s_{\alpha_{j}}u(\lambda_{i})} & \text{\rm otherwise}
\end{cases}
\end{align*}
by using the definitions of (\ref{eq2.2}) and (\ref{eq2.4}). Therefore we obtain
\[\sum_{j=1}^{l}e_{-\alpha_{j}}+qe_{\psi}=\sigma_{s_{\alpha_{i}}}\circ.
\]

\fontsize{9pt}{0pt}\selectfont

\em
\noindent
Department of Mathematics\newline
Faculty of Science and Engineering\newline
Waseda University\newline
3-4-1 Okubo, Shinjuku, Tokyo 169-8555\newline
JAPAN

\end{document}